\newcommand{\cmark}{\ding{51}}%
\newcommand{\xmark}{\ding{55}}%
\DeclarePairedDelimiter{\ceil}{\lceil}{\rceil}
\newtheorem{theorem}{Theorem}[section]
\newtheorem{lemma}[theorem]{Lemma}
\newtheorem{remark}[theorem]{Remark}
\theoremstyle{definition}
\newtheorem{definition}[theorem]{Definition}
\newenvironment{example}
  {\pushQED{\qed}\examplex}
  {\popQED\endexamplex}
\newcommand{\N}{\mathbb{N}}
\newcommand{\m}{\mathfrak{m}}
\newcommand{\CS}{\textup{CS}}
\newcommand{\ord}{\textup{ord}}
\newcommand{\A}{\mathcal{A}}
\newcommand{\Z}{\mathbb{Z}}
\newcommand{\F}{\mathcal{F}}
\newcommand{\Hess}{\mathcal{H}}
\newcommand{\C}{\mathbb{C}}
\newcommand{\qhat}{{\hat{q}}}
\newcommand{\D}{\delta}
\newcommand{\R}{\mathbb{R}}
\pgfplotsset{
  log x ticks with fixed point/.style={
      xticklabel={
        \pgfkeys{/pgf/fpu=true}
        \pgfmathparse{exp(\tick)}%
        \pgfmathprintnumber[fixed relative, precision=3]{\pgfmathresult}
        \pgfkeys{/pgf/fpu=false}
      }
  }}
\pgfplotsset{compat=1.8}
\newcommand{\bl}[1]{\textcolor{blue}{#1}}
\newcommand{\pair}[1]{\langle{#1}\rangle}
\newcommand{\norm}[1]{\left \lVert{#1} \right \rVert}
\begin{document}
\title{A Robust Numerical Path Tracking Algorithm \\
for Polynomial Homotopy Continuation}
\author{Simon Telen\thanks{\texttt{simon.telen@kuleuven.be}} \and Marc Van Barel\thanks{\texttt{marc.vanbarel@kuleuven.be}, supported by
the Research Council KU Leuven,
C1-project (Numerical Linear Algebra and Polynomial Computations),
and by
the Fund for Scientific Research--Flanders (Belgium),
G.0828.14N (Multivariate polynomial and rational interpolation and approximation),
and EOS Project no 30468160.}
\and Jan Verschelde\thanks{\texttt{janv@uic.edu}, supported by the National Science Foundation 
under grant DMS 1854513.}
}

\maketitle
\begin{abstract}
We propose a new algorithm for numerical path tracking in polynomial homotopy continuation. The algorithm is `robust' in the sense that it is designed to prevent path jumping and in many cases, it can be used in (only) double precision arithmetic. It is based on an adaptive stepsize predictor that uses Pad\'e techniques to detect local difficulties for function approximation and danger for path jumping. We show the potential of the new path tracking algorithm through several numerical examples and compare with existing implementations.
\end{abstract}

\section{Introduction} \label{sec:introduction}
Homotopy continuation is an important tool in numerical algebraic geometry. It is used for, among others, isolated polynomial root finding and for the numerical decomposition of algebraic varieties into irreducible components. For introductory texts on numerical algebraic geometry and homotopy continuation, we refer to \cite{AG03,li1997numerical,sommese2001numerical,sommese2005introduction,sommese2005numerical} and references therein. The reader who is unfamiliar with algebraic varieties 
may also consult, e.g., \cite{cox1} for an excellent introduction. 

Let $X$ be an affine variety of dimension $n$ with coordinate ring $R = \C[X]$ (this is the ring of polynomial functions on $X$, see \cite[Chapter 5, \S 4]{cox1}) and let $h_i, i = 1, \ldots, n$ be elements of $R[t] = \C[X \times \C] = \C[X] \otimes_\C \C[t]$. The $h_i$ define the map 
\begin{equation*}
H : X \times \C \rightarrow \C^n 
\end{equation*}
given by $H(x,t) = (h_i(x,t))_{i=1}^n$. Such a map $H$ should be thought of as a family of morphisms $X \rightarrow \C^n$ parametrized by $t$, which defines a \textit{homotopy} with continuation parameter $t$. This gives the \textit{solution variety}
\begin{equation*}
Z = H^{-1}(0) = \{ (x,t) \in X \times \C ~|~ h_i(x,t) = 0, i = 1, \ldots, n \} \subset X \times \C.
\end{equation*}
We will limit ourselves to the cases $X = \C^n, R = \C[x_1, \ldots, x_n]$ and $X = (\C \setminus \{0\})^n, R = \C[x_1^{\pm 1}, \ldots, x_n^{\pm 1} ]$. We will refer to the second case as the \textit{toric case} and $(\C \setminus \{ 0 \})^n$ is called the \textit{algebraic torus}. In both cases, we will use the coordinates $x = (x_1, \ldots, x_n)$ on $X$. Note that for every fixed parameter value $t^* \in \C$, $H_{t^*} : X \rightarrow \C^n : x \mapsto H(x,t^*)$ represents a system of $n$ (Laurent) polynomial equations in $n$ variables with solutions $H_{t^*}^{-1}(0) \subset X$. Typically, for some parameter value $t_0 \in \C$, $H_{t_0}$ is a \textit{start system} with known, isolated and regular solutions and for some other $t_1 \neq t_0$, $H_{t_1}$ represents a \textit{target system} we are interested in. Consider a point $(z_0,t_0) \in Z$. The task of a homotopy continuation algorithm is to track the point $(z_0, t_0) \in Z$ to a point $(z_1, t_1) \in Z$ along a continuous path 
\begin{equation*}
\{(x(s), \Gamma(s)), s \in [0,1)\} \subset Z
\end{equation*}
with $\Gamma : [0,1] \rightarrow \C$ and $x(s) \in X, s \in [0,1)$ such that 
$\Gamma(0) = t_0,x(0) = z_0, \Gamma(1) = t_1, x(1) = z_1$. 
We will mainly restrict ourselves to paths of the form $\{(x(t),t), t \in [0,1)\}$ (i.e.\ $\Gamma(s) = s$), but other $\Gamma$ will be useful for constructing illustrative examples. The reason for excluding the point $s = 1$ from some of the intervals in these definitions is that continuous paths in $Z$ might `escape' from $X \times \C$ when the parameter $t$ approaches the target value $t_1$. For example, solutions may move to infinity or out of the algebraic torus. This kind of behaviour, together with singular points on the path (e.g.\ \textit{path crossing}) may cause trouble for numerical path tracking (we will make this more precise in Section \ref{sec:pathtracking}). Many tools have been developed for dealing with such situations \cite{huber1998polyhedral,morgan1990computing,morgan1992power,piret2010sweeping}. In this paper, we do not focus on this kind of difficulties. Existing techniques can be incorporated in the algorithms we present.

In typical constructions, such as linear homotopies for polynomial system solving, $H$ is randomized such that the paths that need to be tracked do not contain singular points with probability one \cite{sommese2005numerical}. This implies for example that all paths are disjoint. However, there might be singularities very near the path in the parameter space. In this situation, the coordinates in $X$ along the path may become very large, which causes scaling problems\footnote{Scaling problems caused by large coordinates can be resolved in
homogeneous coordinates, after a projective transformation \cite{Mor87}.}, or two different paths may be very near to each other for some parameter values. The latter phenomenon causes \textit{path jumping}, which is considered one of the main problems for numerical path trackers. Path jumping occurs when along the way, the solution that's being tracked `jumps' from one path to another. The typical reason is that starting from a point in $H_{t^*}^{-1}(0)$, the \textit{predictor} step in the path tracking algorithm returns a point in $X \times \{t^* + \Delta t \}$ which, according to the \textit{corrector} step, is a numerical approximation of a point in $H_{t^* + \Delta t}^{-1}(0)$ which is on a different path than the one being tracked. 
It is clear that path jumping is more likely to occur in the case where two or more paths come near each other. Ideally, a numerical path tracker should take small steps $\Delta t$ in such `difficult' regions and larger steps where there's no risk for path jumping. There have been many efforts to design such \textit{adaptive stepsize} path trackers \cite{gervais2004continuation,kearfott1994interval,schwetlick1987higher,timme2019adaptive}. However, the state of the art homotopy software packages such as PHCpack \cite{verschelde1999algorithm}, Bertini \cite{bates2013numerically} and HomotopyContinuation.jl \cite{breiding2018homotopycontinuation} still suffer from path jumping, as we will show in our experiments. A typical way to adjust the stepsize is by an \textit{a posteriori} step control. This is represented schematically (in a simplified way) by Figure \ref{figfeedbackloops}.
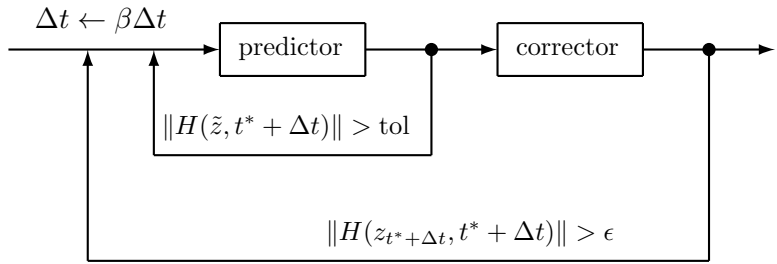
\begin{figure}[hbt]
\begin{center}
\scalebox{1.0}{\begin{picture}(300,100)(20,10)
\thicklines
\put(30,98){$\Delta t \gets \beta \Delta t$}
\put(20,90){\vector(1,0){80}}
\put(100,80){\line(0,1){20}}
\put(100,80){\line(1,0){55}}
\put(100,100){\line(1,0){55}}
\put(107,88){\small predictor}
\put(155,80){\line(0,1){20}}
\put(155,90){\vector(1,0){50}}
\put(205,80){\line(0,1){20}}
\put(205,80){\line(1,0){55}}
\put(205,100){\line(1,0){55}}
\put(212,88){\small corrector}
\put(260,80){\line(0,1){20}}
\put(260,90){\vector(1,0){50}}
\put(180,90){\circle*{5}}
\put(180,90){\line(0,-1){40}}
\put(180,50){\line(-1,0){105}}
\put(75,50){\vector(0,1){40}}
\put(78,58){\small $\|H(\tilde{z}, t^* + \Delta t)\| > \textup{tol}$}
\put(285,90){\circle*{5}}
\put(285,90){\line(0,-1){80}}
\put(285,10){\line(-1,0){235}}
\put(50,10){\vector(0,1){80}}
\put(140,18){\small $\|H(z_{t^* + \Delta t}, t^* + \Delta t)\| > \epsilon$}
\end{picture}}
\caption{Two feedback loops in a predictor-corrector method for a posteriori step control.}
\label{figfeedbackloops}
\end{center}
\end{figure}
In the figure, $0<\beta<1$ is a real constant, the $\| \cdot \|$ should be interpreted as a relative measure of the backward error and $\tilde{z}$ is the predicted solution which is refined to $z_{t^* + \Delta t}$ by the corrector. If $\textup{tol} \leq \epsilon$, then the corrector stage is not needed.
If $\textup{tol} = \infty$, then the first feedback loop never happens.
Such extreme choices for $\textup{tol}$ are not recommended.
With well chosen values for $\textup{tol}$ and $\epsilon$,
the second feedback loop never occurs, as Newton's method converges
to the required accuracy of $\epsilon$ in just a couple of steps. This type of feedback loops is implemented in, e.g., PHCpack \cite{verschelde1999algorithm} and Bertini \cite{BHSW08}. Recently, Sascha Timme has developed a new adaptive stepsize algorithm that is implemented in HomotopyContinuation.jl (v1.1) \cite{timme2019adaptive}. In this algorithm, the first $\Delta t$ that enters the loop in Figure \ref{figfeedbackloops} is computed such that it is an (estimate for an) upper bound for all `feasible stepsizes', and the corrected stepsize in case of rejection is computed in a novel way. For details, see \cite{timme2019adaptive}.

Certified path trackers have been developed to prevent path jumping \cite{beltran2013robust,burgisser2013condition,Van15,xu2018approach}, but they require more computational effort. Moreover, the certification assumes that the coefficients of the input systems are exact rational numbers, as stated in \cite{beltran2013robust}.

In this paper, we propose an adaptive stepsize path tracking algorithm that is robust yet efficient. As opposed to standard methods, we use \textit{a priori} step control: we compute the appropriate stepsize \textit{before} taking the step. We use Pad\'e approximants \cite{baker1996pade} of the solution curve $x(t)$ in the predictor step, not only to generate a next approximate solution, but also to detect nearby singularities in the parameter space. In the case of type $(L,1)$ approximants (see Section \ref{sec:pade} for a definition), this is a direct application of Fabry's ratio theorem (Theorem \ref{thm:fabry}). The Pad\'e approximants are computed from the series expansion of $x(t)$. We use the iterative, symbolic-numeric algorithm from \cite{bliss2018method} to compute this series expansion. For an appropriate starting value $x^{(0)}(t) \in \C[[t]]$, we prove `second order convergence' of this iteration in the sense that $x(t) - x^{(k)}(t) = 0 \mod \langle t^{2k} \rangle$ where $x^{(k)}(t) \in \C[[t]]$ is the approximate series solution after the $k$-th iteration and $\langle \cdot \rangle$ denotes the ideal generated in the power series ring $\C[[t]]$ (see Proposition \ref{thm:powersol}). We use information contained in the Pad\'e approximant to determine a trust region for the predictor and use this as a first criterion to compute the adaptive stepsize. A second criterion is based on an estimate for the distance to the most nearby path and a standard approximation error estimate for the Pad\'e approximant. 

We note that Pad\'e approximants have been used before in path tracking algorithms \cite{gervais2004continuation,schwetlick1987higher}. In these articles, their use has been limited to type $(2,1)$ Pad\'e approximants (see later for a definition) and they have not been used as nearby singularity detectors. In \cite{JMSW09}, Pad\'e approximants are used in the context of symbolic deformation methods. 
Pad{\'e} approximants are applied to solve nonlinear systems arising
in power systems~\cite{Tri12}.
In~\cite{TM16}, conceptual differences with continuation methods 
are discussed.  Recent practical comparisons between this holomorphic 
embedding based continuation method and polynomial homotopy continuation
can be found in~\cite{WW19}.

The paper is organized as follows. In the next section, we describe numerical path tracking algorithms for smooth paths in general and give some examples. In Section \ref{sec:pade} we discuss fractional power series solutions and Pad\'e approximants. Section \ref{sec:powerseries} contains a description of an algorithm introduced in \cite{bliss2018method} to compute power series solutions and a new proof of convergence. The resulting path tracking algorithm is described in Section \ref{sec:algorithm} and implemented in version 2.4.72 of PHCpack, which is available on github. We show the algorithm's effectiveness through several numerical experiments in Section \ref{sec:numexp}. We compare with the built-in path tracking routines in (previous versions of) PHCpack \cite{verschelde1999algorithm}, Bertini \cite{bates2013numerically} and HomotopyContinuation.jl \cite{breiding2018homotopycontinuation}.

\section{Tracking Smooth Paths} \label{sec:pathtracking}
Let $H(x,t) : X \times \C \rightarrow \C^n$ be as in the introduction where $X$ is either $\C^n$ or $(\C \setminus \{0 \})^n$. We denote $Z = H^{-1}(0)$ and we assume that $\dim(Z) = 1$. To avoid ambiguities, we will denote $t$ for the coordinate on $\C$ in $X \times \C$ and $t^* \in \C$ for points in $\C$. We define the projection map 
$ 
\pi : Z  \rightarrow \C : (x,t) \mapsto t.
$
By \cite[Theorem 7.1.1]{sommese2005numerical} $\pi$ is a ramified cover of $\C$ with ramification locus $S$ consisting of a finite set of points in $\C$, such that the fiber $\pi^{-1}(t^*)$ consists of a fixed number $\deg \pi = \D \in \N$ of points in $Z$ if and only if $t^* \in \C \setminus S$. Let 
\begin{equation*}
J_H(x,t) = \left ( \frac{\partial h_i}{\partial x_j} \right )_{i,j=1,\ldots,n}  
\end{equation*}
be the Jacobian matrix of $H$ with respect to the $x_j$. 
\begin{definition} Let $H, Z$ be defined as above. Let $\Gamma: [0,1] \rightarrow \C$ and let $P = \{(x(s),\Gamma(s)), s \in [0,1) \} \subset Z$ be a continuous path in $Z$. We say that $P$ is \textup{smooth} if $J_H(x,t) \in \textup{GL}(n, \C)$ for all $(x,t) \in P$.
\end{definition}
If $P = \{(x(s),\Gamma(s)) ~|~ s \in [0,1) \} \subset Z$ is continuous with $\Gamma([0,1)) \cap S = \emptyset$, then $P \subset \pi^{-1}(\C \setminus S)$ is smooth. In this case, $\Gamma$ is called a \textit{smooth parameter path}. In more down to earth terms, $\Gamma$ is smooth if $\{\Gamma(s), s \in [0,1) \} \subset \C$ contains only parameter values $t^*$ for which $H_{t^*}$ represents a (Laurent) polynomial system with the expected number of regular solutions.

\begin{example} \label{ex:sec2}
Consider the homotopy taken from \cite{kearfott1994interval} defined by 
\begin{equation} \label{famofhyperb}
H(x,t) = x^2 - (t-1/2)^2 - p^2
\end{equation}
where $p \in \R$ is a parameter which we take to be $0.1$ in this example. It is clear that a generic fiber $\pi^{-1}(t^*)$ consists of the two points 
\begin{equation*} 
\pm \sqrt{(t^* - 1/2)^2 + p^2}
\end{equation*}
and the ramification locus is $S = \{ 1/2 \pm p \sqrt{-1} \}$. Note that $J_H = \frac{\partial H}{\partial x}$ is equal to zero at $\pi^{-1}(t^*)$ for $t^* \in S$. We consider three different parameter paths: 
\begin{eqnarray*}
\Gamma_1: s &\mapsto & s, \\
\Gamma_2: s &\mapsto & s - 4ps(s-1)\sqrt{-1}, \\
\Gamma_3: s &\mapsto & s + 0.2 \sin (\pi s) \sqrt{-1}.
\end{eqnarray*}
In Figure \ref{fig:pathsexsec2} these paths are drawn in the complex plane. The background colour at $t^* \in \C$ in this figure corresponds to the absolute value of $J_H$ evaluated at a point in $\pi^{-1}(t^*)$: dark (blue) regions correspond to a small value, as opposed to light (yellow) regions.
\begin{figure}[h!]
\centering
\includegraphics[scale=0.8]{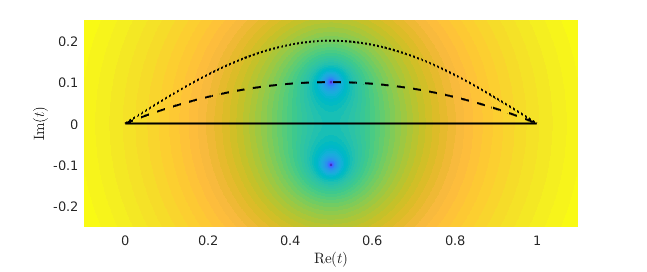}
\caption{The image of $[0,1]$ under $\Gamma_1$ (full line), $\Gamma_2$ (dashed line) and $\Gamma_3$ (dotted line) as defined in Example \ref{ex:sec2}. }
\label{fig:pathsexsec2}
\end{figure}
For each $\Gamma_i$, we track two different paths in $Z$ for $s \in [0,1]$ starting at $(z_0^{(1)},0) = (\sqrt{1/4 + p^2}, 0)$ and $(z_0^{(2)},0) = (- \sqrt{1/4 + p^2}, 0)$ respectively. The result is shown in Figure \ref{fig:solpathssexsec2}.
\begin{figure}[h!]
\centering
\includegraphics[scale=1.0]{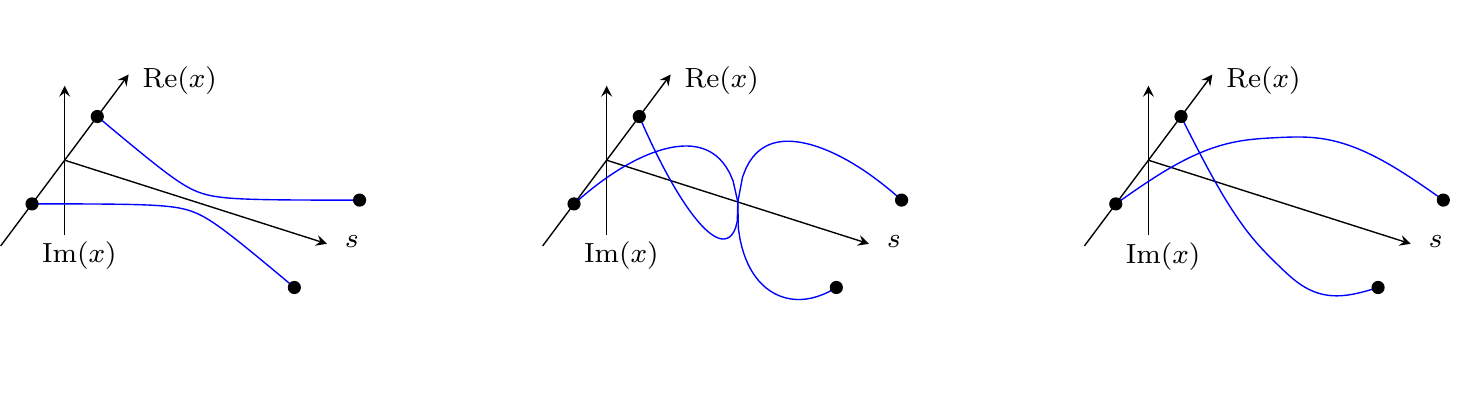}
\caption{Solution curves with respect to $s$ using, from left to right, $\Gamma_1, \Gamma_2$ and $\Gamma_3$.}
\label{fig:solpathssexsec2}
\end{figure}
Denote the corresponding paths on $Z$ by $P^{(i)}_j = \{(x^{(i)}(s), \Gamma_j(s)), s \in [0,1] \}$ where $x^{(i)}(0) = z_0^{(i)}$. Since $\Gamma_1$ and $\Gamma_3$ do not hit any singular points in the parameter space (Figure \ref{fig:pathsexsec2}), the corresponding paths $P^{(i)}_j$ are disjoint and smooth. The paths corresponding to $\Gamma_2$, on the other hand, cross a singularity. They intersect at $s = 1/2$, as can be seen from Figure \ref{fig:solpathssexsec2}. We conclude that $\Gamma_2$ is not smooth.
\end{example}

An important application of smooth path tracking is the solution of systems of polynomial equations. The typical setup is the following. Define 
\begin{equation*}
F : X \rightarrow \C^n : x \mapsto (f_1(x), \ldots, f_n(x))
\end{equation*}
with $f_i \in R$. We want to compute $F^{-1}(0)$, that is, all points $x \in X$ such that $f_i(x) = 0, i = 1, \ldots, n$. The homotopy approach to this problem is to construct $H: X \times \C \rightarrow \C^n$ such that $H_1 : x \mapsto H(x,1)$ satisfies $Z_1 = H_1^{-1}(0) = F^{-1}(0)$ (the \textit{target system} is equivalent to $F$) and the \textit{start system} $G = H_0 : x \mapsto H(x,0)$ is such that $Z_0 = G^{-1}(0)$ is easy to compute and contains the expected number $\D$ of regular solutions. The number $\D$ is equal to, for example, the B\'ezout number in the case of total degree homotopies, or the mixed volume of the Newton polytopes in the case of polyhedral homotopies \cite{sommese2005numerical,huber1995polyhedral,verschelde1994homotopies}. Moreover, $H$ has the additional property that $\Gamma: [0,1) \rightarrow \C: s \mapsto s$ is a smooth parameter path. We denote 
\begin{equation*}
Z_0 = G^{-1}(0) = \{ z_0^{(1)}, \ldots, z_0^{(\D)} \}
\end{equation*}
and by smoothness of $\Gamma$, we have that 
\begin{equation*}
Z_{t^*} = H_{t^*}^{-1}(0) = \{  z_{t^*}^{(1)}, \ldots, z_{t^*}^{(\D)} \}
\end{equation*}
consists of $\D$ distinct points in $X$ for $t^* \in [0,1)$ and the paths 
$\{ (z^{(i)}_{t^*},t^*), t^* \in [0,1) \}$
are smooth and disjoint. Depending on the given system $F$, $Z_1$ may consist of fewer than $\D$ points, or it might even consist of infinitely many points. Two or more paths may approach the same point as $t^* \rightarrow 1$ or paths may diverge to infinity. As stated in the introduction, several \textit{end games} have been developed to deal with this kind of situations \cite{huber1998polyhedral,morgan1990computing,morgan1992power,piret2010sweeping}. We will focus here on the path tracking before the paths enter the end game operating region. We assume, for simplicity that this region is $[t_{\textup{EG}}, 1]$, for $t_{\textup{EG}}$ a parameter value `near' 1. Algorithm \ref{alg:pathtracking} is a simple template algorithm for smooth path tracking. With a slight abuse of notation, we use $z_{t^*}^{(i)}$ both for actual points on the path and `satisfactory' numerical approximations of the $z_{t^*}^{(i)}$.

\begin{algorithm}
\small
\caption{Template path tracking algorithm with a priori step control}\label{alg:pathtracking}
\begin{algorithmic}[1]
\STATE \textbf{procedure} Track($H, Z_0$)
\STATE{$Z_1 \gets \emptyset$}
\FOR{$z_0^{(i)} \in Z_0$}
\STATE $t^* \gets 0$
\WHILE{$t^* < t_{\textup{EG}}$}
\STATE $(\tilde{z}, \Delta t) \gets \textup{predict}(H,z_{t^*}^{(i)},t^*)$ \label{step:predict}
\STATE $z_{t^*+\Delta t}^{(i)} \gets \textup{correct}(H,\tilde{z},t^*+\Delta t) $ \label{step:correct}
\STATE $t^* \gets t^* + \Delta t$
\ENDWHILE 
\STATE $z_1^{(i)} \gets \textup{endgame}(H,z_{t^*}^{(i)},t^*)$ \label{step:endgame}
\STATE $Z_1 \gets Z_1 \cup \{z_1^{(i)}\}$
\ENDFOR
\STATE \textbf{return} $Z_1$
\STATE \textbf{end procedure}
\end{algorithmic}
\end{algorithm}
The algorithm uses several auxiliary procedures. The \textit{predictor} (line \ref{step:predict}) computes a point $\tilde{z} \in X$ and a stepsize $\Delta t$ such that $\tilde{z}$ is an approximation for $z_{t^* + \Delta t}^{(i)}$. Some existing predictors use an Euler step (tangent predictor) or higher order integrating techniques such as RK4\footnote{Some higher order predictors need several previous points on the path in order to compute $\tilde{z}$. The predictor we present in this algorithm uses only the last computed point, hence the notation in Algorithm \ref{alg:pathtracking}. }. Intuitively, the computed stepsize $\Delta t$ should be small in `difficult' regions. Algorithms that take this into account are called \textit{adaptive stepsize} algorithms. The main contribution of this paper is the adaptive stepsize predictor algorithm which we present in detail in Section \ref{sec:algorithm}. Our predictor computes an appropriate stepsize \textit{before} the step is taken (a priori step control). The \textit{corrector} step (line \ref{step:correct}) then refines $\tilde{z}$ to a satisfactory numerical approximation of $z_{t^*+ \Delta t}^{(i)}$. Typically, satisfactory means that the relative backward error of $z_{t^*+ \Delta t}^{(i)}$ is of size $\pm$ the unit roundoff. The endgame procedure in line \ref{step:endgame} finishes the path tracking by performing an appropriate end game. 

\section{Puiseux Series and Pad\'e Approximants} \label{sec:pade}
In this section we introduce some aspects of Puiseux series solutions and Pad\'e approximants that are relevant to this paper. References are provided for the reader who is interested in a more detailed treatment. In a first subsection we introduce Puiseux series. This will give us insight in the local behaviour of the fibers of $\pi$ near singularities.  In the rest of the section, we discuss Pad\'e approximants with an emphasis on how they behave in the presence of this kind of singularities and present two illustrative examples. We point out that, since we assume smoothness of the path (as described in the previous section), we will not construct series approximations at singularities in our algorithm. The Pad\'e approximant at a regular point is influenced by nearby singular points, and it can be used to estimate their location. 

Let $\C[[t]]$ be the ring of formal power series in the variable $t$ and let $\m = \langle t \rangle$ be its maximal ideal. We denote $\C[t]_{\leq d} \simeq \C[[t]]/\m^{d+1}$ for the $\C$-vector space of polynomials of degree at most $d$. For $f,g \in \C[[t]]$, the notation $f = g + O(t^{d+1})$ means that $f - g \in \m^{d+1}$. The field of fractions of $\C[t]$ is denoted $\C(t)$.
\subsection{Puiseux series} \label{subsec:puiseux}
Let $R = \C[x_1^{\pm 1}, \ldots, x_n^{\pm1}]$ be the ring of Laurent polynomials in $n$ variables and let $X = (\C \setminus \{0 \})^n$ be the $n$-dimensional algebraic torus. We consider a homotopy given by $H(x,t) : X \times \C \rightarrow \C^n$: 
\begin{equation*}
H(x,t) = (h_1(x,t), \ldots, h_n(x,t))
\end{equation*} 
with $h_i \in R[t]$. We will denote 
\begin{equation*}
h_i = \sum_{\qhat \in \A_i} c_\qhat x^q t^{k_q}
\end{equation*} 
where $\qhat = (q, k_q) \in \Z^n \times \mathbb{N}$ represents the exponent of a Laurent monomial in $R[t]$, $c_\qhat \in \C^*$ and $\A_i \subset \Z^n \times \mathbb{N}$ is the support of $h_i$.
A \textit{series solution at $t^*=0$} of $H(x,t)$ is a parametrization of the form 
\begin{equation} \label{eq:param}
\begin{cases}
x_j(s) = a_js^{\omega_j} \left (1 + \sum_{\ell=1}^\infty a_{j\ell}s^\ell \right ), \quad j = 1, \ldots, n \\
t(s) = s^m
\end{cases}
\end{equation}
with $m \in \N \setminus \{0 \}$, $\omega = (\omega_1, \ldots, \omega_n) \in \Z^n$, $a= (a_1, \ldots, a_n) \in (\C \setminus \{0\})^n , a_{j \ell} \in \C$ and such that $H(x(s),t(s)) = H(x_1(s), \ldots, x_n(s),t(s)) \equiv 0$ and there is a real $\epsilon > 0$ such that the series $x_i(s)$ converge for $0< |s|\leq \epsilon$. Such a series representation can be found for all irreducible components of $Z = H^{-1}(0)$ intersecting but not contained in the hyperplane $\{t = 0\}$ (see for instance \cite{huber1998polyhedral,mac1912method,maurer1980puiseux,morgan1992power}). Substituting $\eqref{eq:param}$ in a monomial of $h_i$ we get 
\begin{equation*}
x(s)^q t(s)^{k_q} = a^q s^{\pair{\omega,q} + mk_q}(1 + O(s))
\end{equation*} 
where $\pair{\cdot, \cdot}$ is the usual pairing in $\Z^n$. It follows that the lowest order term in the series $h_i(x(s),t(s))$ has exponent $\min_{\qhat \in \A_i}(\pair{\omega,q} + mk_q)$. Denoting $\hat{\omega} = (\omega,m) \in \Z^{n+1}$ and 
\begin{equation*}
\partial_{\hat{\omega}} \A_i = \{ \qhat \in \A_i | \pair{\hat{\omega}, \qhat } = \min_{\qhat \in \A_i}(\pair{\hat{\omega}, \qhat })\}, \qquad \partial_{\hat{\omega}} h_i = \sum_{\qhat \in \partial_{\hat{\omega}} \A_i} c_\qhat x^q t^{k_q},
\end{equation*}
the vanishing of the lower order terms of $H(x(s),t(s))$ gives 
\begin{equation*}
\partial_{\hat{\omega}} h_i (a,1) = \sum_{\qhat \in \partial_{\hat{\omega}} \A_i} c_{\qhat} a^q = 0, \qquad i = 1, \ldots, n.
\end{equation*}
We note three things. 
\begin{enumerate}
\item The set $\partial_{\hat{\omega}} \A_i$ contains at least two exponents, since none of the $c_\qhat$ are zero and $a \in (\C \setminus \{0 \})^n$. It follows that $\partial_{\hat{\omega}} \A_i$ corresponds to a positive dimensional face $\F_{\hat{\omega}}$ of the convex hull $Q_i$ of $\A_i$. Since it is defined by $\hat{\omega} = (\omega, m)$ with $m \in \N \setminus \{0\}$, $\F_{\hat{\omega}}$ is contained in the \textit{lower hull} of $Q_i$ (the facet normal points in the positive $t$-direction).
\item The point $(a,1) \in (\C^*)^{n+1}$ is a solution of the 
\textit{face system} corresponding to $\hat{\omega}$:
\begin{equation*}
\partial_{\hat{\omega}} h_1 (a,1) = \cdots = \partial_{\hat{\omega}} h_n (a,1) = 0.
\end{equation*}
\item The algorithm to compute more terms of the series is a generalization of the Newton-Puiseux procedure for algebraic plane curves and can be found, for instance, in \cite{maurer1980puiseux}.
\end{enumerate}
For $t=0$, $H_0 = H(x,0)$ represents a square polynomial system in the $x_i$ and a series solution at $t=0$ corresponds to a solution $x(0)$ of this system. If $\omega = 0$, $H(a,0) = 0$ and hence $a \in (\C^*)^n$ is a toric solution. If one of the coordinates of $\omega$, say $\omega_j$ is nonzero, then $x_j(s)$ is either zero for $s = 0$ ($\omega_j > 0$) or escapes to infinity as $s \rightarrow 0$ ($\omega_j < 0$). 
\begin{remark} \label{rem:shift}
A \textit{series solution at $t = t^*$, $t^* \in \C$} of $H(x,t)$ can be obtained from a series solution around $ t = 0$  of $G(x,t) = H(x,t+t^*)$. It satisfies $H(x(s),t(s)) = 0$ and has the form 
\begin{equation*} 
\begin{cases}
x_j(s) = a_j s^{\omega_j} \left (1 + \sum_{\ell=1}^\infty a_{j\ell}s^\ell \right ), \quad j = 1, \ldots, n \\
t(s) = t^* + s^m
\end{cases}.
\end{equation*}
\end{remark}
Substituting $s = t^{1/m}$ in the coordinate functions we get 
\begin{equation} \label{eq:paramt}
x_j(t) =  a_j t^{\omega_j/m } \left ( 1 + \sum_{\ell=1}^\infty a_{j\ell}t^{\ell/m}\right ), \quad j = 1, \ldots, n 
\end{equation}
which is a \textit{Puiseux series} of order $\omega_j/m$. We think of $x_j(t)$ as a function of a complex variable $t$, convergent by assumption in the punctured disk $0 < |t| \leq \epsilon^{m}$. Then $t^*=0$ is either a regular point if \eqref{eq:paramt} is a Taylor series, a pole if it is a Laurent series with strictly negative powers, or a branch point if non integer fractional powers occur. Since in a regular point $t^*$, the $x_j(t)$ are Taylor series, they will have convergence radii equal to the distance to the nearest singular point $t_s$. The corresponding series solution(s) of $H(x,t)$ around $t = t_s$ will give the type of singularity. The discussion in this subsection shows that $t = t_s$ is either a branch point or a pole. 

\begin{example} \label{ex:algcurve}
Consider the algebraic plane curve given by $H(x,t) = t x^3 + 2x^2 +t$. The Newton polygon is given in the left part of Figure \ref{fig:algcurve}. The faces of the lower hull are indicated with bold blue lines. The facet normals are also shown in the figure (not to scale). From the discussion above, the parameters of any series solution $(x(s),t(s))$ must be such that $x(s) = a s^{\omega} (1 + O(s)), t(s) = s^m$ with $ \hat{\omega} = (\omega,m)$ one such facet normal. Furthermore, the constant $a$ must be a nonzero solution of the face system $\partial_{\hat{\omega}} H(a,1) = 0$. For $\hat{\omega}_1 = (-1,1)$, the face equation is $tx^3+2x^2 = 0$ with nonzero solution $a= -2$ for $t = 1$. We expect a series solution $x_1(t) = -2t^{-1} + O(1)$. There are no other nonzero solutions to the face equation, so we consider the next facet normal. The vector $\hat{\omega}_2 = (1,2)$ gives face equation $2x^2+1$ with two nonzero solutions $\pm \sqrt{-2}/2$. This gives $x_2 = \sqrt{-2t}/2 + O(t)$ and $x_3 = -\sqrt{-2t}/2 + O(t)$. The real parts of the solution curves are shown in the right part of Figure \ref{fig:algcurve}.

\begin{figure}[h!]
\centering
\includegraphics[scale=1]{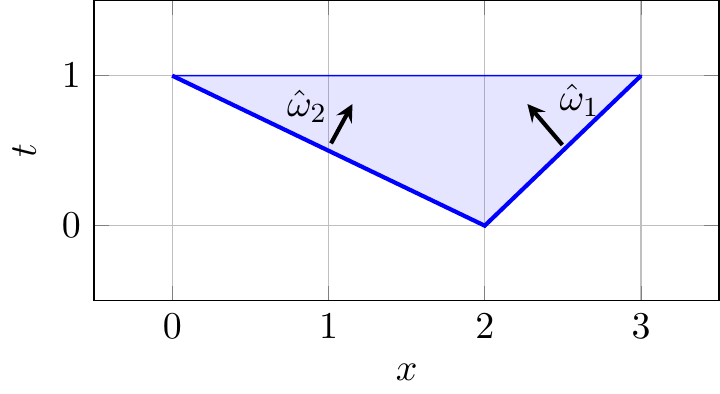}
\qquad \qquad
\includegraphics[scale=1]{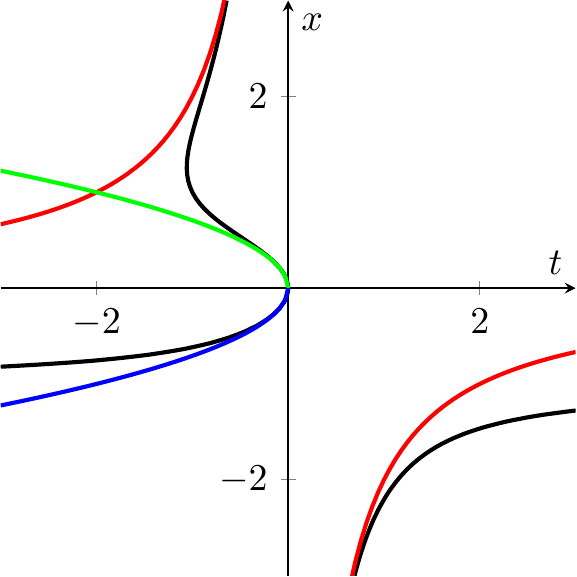}
\caption{Left: Newton polygon of $H(x,t)$ from Example \ref{ex:algcurve}. Right: the curve $H(x,t) = 0$ (black), and the first term of the series expansions $x_1$ (red), $x_2$ (green) and $x_3$ (blue).
}
\label{fig:algcurve}
\end{figure}
\end{example}
 
\subsection{Pad\'e approximants} \label{subsec:pade}
In the rest of this section we discuss Pad\'e approximants and the way they behave in the presence of poles and branch points. An extensive treatment of Pad\'e approximants can be found in \cite{baker1996pade}. We will limit ourselves to the definition and the properties that are relevant to the heuristics of our algorithm.

The following definition uses some notation of \cite{baker1996pade}.
\begin{definition}[Pad\'e approximant] \label{def:pade}
Let $x(t) = \sum_{\ell=0}^\infty c_\ell t^\ell \in \C[[t]]$. The \textup{type $(L,M)$ Pad\'e approximant} of $x(t)$ is 
\begin{equation*}
[L/M]_x = \frac{p(t)}{q(t)} \in \C(t)
\end{equation*}
such that $p(t) \in \C[t]_{\leq L}$ and $q(t) \in \C[t]_{\leq M}$  is a unit in $\C[[t]]$, with
\begin{equation} \label{eq:padecondition}
[L/M]_x - x \in \m^k
\end{equation}
for $k$ maximal. 
\end{definition}
Informally, Pad\'e approximants are rational functions agreeing with the Maclaurin series of a function $x$ up to a degree that is as large as possible. They are generalizations of truncated Maclaurin series, which are type $(L,0)$ Pad\'e approximants. Just like Maclaurin expansions are specific instances of Taylor expansions, it is straightforward to define Pad\'e approximants around points $t = t^*$ in the complex plane different from 0. Without loss of generality, we consider only approximants around $t^* = 0$, since the general case reduces to this case after a simple change of coordinates. The type $(L,M)$ Pad\'e approximant is known to exist and it is unique. Multiplying the condition \eqref{eq:padecondition} by $q$ yields 
\begin{equation} \label{eq:linpadecondition}
p(t) - x(t)q(t) \in \m^k \quad \textup{ or equivalently, } \quad p(t) = x(t)q(t) + O(t^{k})
\end{equation}
for $k$ maximal. Writing $p(t) = a_0 + a_1 t + \ldots + a_L t^L$, $q(t) = b_0 + b_1 t + \ldots + b_M t^M$ and equating terms of the same degree, this gives $k$ linear conditions on the $a_i$, $b_i$, which can always be satisfied for $k \leq M + L + 1$. So for the linearized condition \eqref{eq:linpadecondition}, $k$ is at least $M+L+1$. Computing the $a_i$ and $b_i$ in practice is a nontrivial task. Difficulties are, for instance, degenerate situations where $\deg(p) < L$ or $\deg(q) < M$ and the presence of so-called \textit{Froissart doublets} (spurious pole-zero pairs \cite[Chapter~27]{trefethen2019approximation}). Some of the issues are discussed in \cite[Chapter 2]{baker1996pade} and in \cite{beckermann2015algebraic,gonnet2013robust,IA13}. In \cite{gonnet2013robust}, a robust algorithm is proposed for computing Pad\'e approximants. We will use this algorithm to compute Pad\'e approximants from the coefficients $c_i$ in our algorithm, presented in Section \ref{sec:algorithm}. The algorithm we use to compute the $c_i$ is discussed in the next section.\\

What's important for our purpose is that a Pad\'e approximant can be used to detect singularities of $x(t)$ of the types we are interested in (poles and branch points) close to $t^*=0$, even for relatively small $L$ and $M$. The idea is to compute Pad\'e approximants of the coordinate functions $x_j(t)$ from local information on the path (the series coefficients $c_\ell$) and use them as a \textit{radar} for detecting difficulties near the path. We are now going to motivate this. Since we intend to use Pad\'e approximants to detect only \textit{nearby} singularities, a natural first class to consider is the type $(L,1)$ approximants. We allow the approximant to have only one singularity, and hope that it chooses to place this singularity near the actual nearest singularity to capture the nearby non-analytic behaviour. Here is a powerful result due to Beardon \cite{beardon1968location}. 
\begin{theorem} \label{thm:beardon}
Let $x_j(t)$ be analytic in $\{ t^* \in \C ~|~ |t^*| \leq r\}$. An infinite subsequence of $\{[L/1]_{x_j} \}_{L=0}^\infty$ converges to $x_j(t)$ uniformly in $\{ t^* \in \C ~|~ |t^*| \leq r\}$.
\end{theorem}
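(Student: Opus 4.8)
Assume first that $x_j$ is not a polynomial (otherwise $[L/1]_{x_j}=x_j$ for all $L$ beyond its degree and there is nothing to prove), so infinitely many Maclaurin coefficients $c_\ell$ of $x_j$ are nonzero. The plan is to work with an explicit description of the $(L,1)$ approximant rather than abstract convergence theory. Normalizing the denominator to $q(t)=1-\beta_L t$, the Padé conditions force $p(t)=\sum_{k=0}^{L}(c_k-\beta_L c_{k-1})\,t^k$ (with $c_{-1}:=0$) and, provided $c_L\neq 0$, $\beta_L=c_{L+1}/c_L$; a one-line computation then yields the error formula
\[
  [L/1]_{x_j}(t)-x_j(t)\;=\;\frac{\sum_{k\ge L+2}\bigl(\beta_L c_{k-1}-c_k\bigr)\,t^k}{1-\beta_L t}.
\]
Since $x_j$ is analytic on $\{|t|\le r\}$, its Maclaurin series has radius of convergence $\rho>r$; fixing $\rho'$ with $r<\rho'<\rho$, there is a constant $C$ with $|c_\ell|\le C\,(\rho')^{-\ell}$ for all $\ell$.

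The delicate point — and the reason one can only claim convergence of a \emph{subsequence} — is that the pole $1/\beta_L=c_L/c_{L+1}$ of $[L/1]_{x_j}$ need not stay away from the disk $\{|t|\le r\}$, because ratios of Taylor coefficients need not converge or even remain bounded. I would handle this by selecting indices via a maximum: for each $L$, let $j(L)$ be the smallest integer $k\ge L$ at which $|c_k|\,(\rho')^k$ attains its maximum over $k\ge L$. Then $c_{j(L)}\neq 0$ (this maximum is positive, as $x_j$ is not a polynomial), and maximality gives $|c_{j(L)+1}|\,(\rho')^{j(L)+1}\le|c_{j(L)}|\,(\rho')^{j(L)}$, i.e. $|\beta_{j(L)}|\le 1/\rho'$. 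Consequently, along the infinite index set $J=\{\,j(L):L\ge 0\,\}$ (infinite since $j(L)\ge L$ for all $L$) the pole of $[L'/1]_{x_j}$ lies outside $\{|t|<\rho'\}$ and $|1-\beta_{L'}t|\ge 1-r/\rho'>0$ on $\{|t|\le r\}$.

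It then remains to estimate the error formula along $L'\in J$. On $\{|t|\le r\}$ its numerator is bounded by $\sum_{k\ge L'+2}\bigl(|\beta_{L'}|\,|c_{k-1}|+|c_k|\bigr)r^k\le 2C\sum_{k\ge L'+2}(r/\rho')^k$, using $|\beta_{L'}|\le 1/\rho'$ and $|c_\ell|\le C(\rho')^{-\ell}$, which is $O\bigl((r/\rho')^{L'}\bigr)$; combined with the lower bound $1-r/\rho'$ on the denominator this yields $\sup_{|t|\le r}\bigl|[L'/1]_{x_j}(t)-x_j(t)\bigr|=O\bigl((r/\rho')^{L'}\bigr)\to 0$ as $L'\to\infty$ in $J$, which is exactly the claimed uniform convergence. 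The only genuine obstacle is the middle step — controlling the pole location — and the maximum trick above is precisely what circumvents it; everything else is a geometric-series estimate. This reproduces the content of Beardon's theorem \cite{beardon1968location}, which could also simply be cited.
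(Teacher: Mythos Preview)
Your argument is correct. The paper does not actually supply a proof of this statement; it simply refers the reader to Beardon's original article and to Baker--Graves-Morris, so there is nothing to compare against on the paper's side. What you have written is in fact essentially Beardon's own proof: the explicit form of the $(L,1)$ approximant, the error identity
\[
[L/1]_{x_j}(t)-x_j(t)=\frac{\sum_{k\ge L+2}(\beta_L c_{k-1}-c_k)t^k}{1-\beta_L t},
\]
and above all the ``maximum trick'' to extract a subsequence along which $|\beta_{L'}|\le 1/\rho'$, are precisely the ingredients of the classical argument.

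One small point worth making explicit: when you choose $j(L)$ as the index at which $|c_k|(\rho')^k$ attains its maximum over $k\ge L$, you are tacitly using that this maximum is actually attained. This is true, but it requires more than the bound $|c_\ell|\le C(\rho')^{-\ell}$ (which only gives boundedness of the sequence); it follows because $\rho'<\rho$ forces $|c_k|(\rho')^k\to 0$, so the supremum over the tail is achieved at some finite index. With that remark added, the proof is complete.
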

\begin{proof}
We refer to \cite{beardon1968location} or \cite[Theorem 6.1.1]{baker1996pade} for a proof. 
\end{proof}
This applies in our case as follows. Suppose that $(a,0) \in X \times \C$ is a regular point of the variety $Z = H^{-1}(0)$ and the irreducible component of $Z$ containing $(a,0)$ is not contained in $\{(x,t^*) \in X \times \C ~|~ t^* = 0 \}$. Then there is a holomorphic function $x: \C \rightarrow X$ such that $x(0) = a$ and $H(x(t^*),t^*) = 0$ for $t^*$ in some nonempty open neighborhood of $0$ (see for instance Theorem A.3.2 in \cite{sommese2005numerical}). That is, if $a$ is a regular solution of $H_0$, then the corresponding power series solution \eqref{eq:param} consists of $n$ Taylor series $x_j(t)$. The function $x(t)$ can be continued analytically in a disk with radius $r$ if no singularities lie within a distance $r$ from the origin. Theorem \ref{thm:beardon} makes the following statement precise. For large enough degrees $L$ of the numerator of the Pad\'e approximant, the $[L/1]_{x_j}$ are expected to approximate the coordinate functions $x_j(t)$ in a disk centered at the origin with radius $\pm$ the distance to the most nearby singularity. 
The fact that for sufficiently large $L$, the pole of $[L/1]_{x_j}$ is expected to give an indication of the \textit{distance} to the nearest singularity (also if it is a branch point) can be seen as follows. Write $x_j(t) = \sum_{\ell = 0}^\infty c_\ell t^\ell$ for the Maclaurin expansion of the coordinate function $x_j(t)$. Then a simple computation shows that if $c_L \neq 0$, 
\begin{equation*}
[L/1]_{x_j} = c_0 + c_1 t + \ldots + c_{L-1} t^{L-1} + \frac{c_L t^L}{1-c_{L+1}t/c_L}.
\end{equation*}
Hence the pole of $[L/1]_{x_j}$ is $c_L/c_{L+1}$ (or it is $\infty$ if $c_{L+1} = 0$). For large $L$, the modulus $|c_L/c_{L+1}|$ can be considered an approximation of the limit 
\begin{equation*}
\lim_{L \rightarrow \infty} \left | \frac{c_L}{c_{L+1}} \right | 
\end{equation*}
if this limit exists. Also, if this limit exists it is a well known expression for the convergence radius of the power series $x_j(t) = \sum_{\ell=0}^\infty c_\ell t^\ell$, which is the distance to the nearest singularity. Since the main application we have in mind is polynomial system solving, in which the homotopy is usually `randomized', in practice this limit exists and for reasonably small $L$, $|c_L/c_{L+1}|$ is a satisfactory approximation of the convergence radius of the power series. Theorem \ref{thm:beardon} suggests that more is true: it can be expected that the ratio $c_L/c_{L+1}$ is a reasonable estimate for the actual location of the most nearby singularity. 
This is Fabry's ratio theorem~\cite{fabry1896points};
see also~\cite{Bie55,dienes1957taylor,Sue02}.
\begin{theorem} \label{thm:fabry}
If the coefficients of the power series $x_j(t) = \sum_{\ell=0}^\infty c_\ell t^\ell$ satisfy $\lim_{L \rightarrow \infty} c_L/c_{L+1} = t_s$, then $t=t_s$ is a singular point of the sum of this series. The point $t = t_s$ belongs to the boundary of the circle of convergence of the series.
\end{theorem}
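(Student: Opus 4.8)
The statement has two independent parts: that $t_s$ lies on the boundary of the disk of convergence of $f(t) := \sum_{\ell\ge 0} c_\ell t^\ell$, and that $t_s$ is actually a singular point of $f$. The first part is the elementary fact already recalled just before the theorem: the hypothesis $c_L/c_{L+1}\to t_s$ forces $c_L\ne 0$ for large $L$, and $t_s\ne 0$ (otherwise $|c_{L+1}/c_L|\to\infty$ and the series diverges off the origin, with no ``sum'' to speak of), so $|c_{L+1}/c_L|\to 1/|t_s|$ and the ratio test gives radius of convergence $R=|t_s|$; hence $t_s$ lies on the circle $|t|=R$.

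For the second part I would first normalize: replacing $t$ by $t_s t$ sends $f$ to $g(t)=\sum_\ell (c_\ell t_s^\ell)\,t^\ell$, whose coefficients $d_\ell=c_\ell t_s^\ell$ satisfy $d_\ell/d_{\ell+1}\to 1$, which has radius of convergence $1$, and which is singular at $t=1$ iff $f$ is singular at $t=t_s$. So assume $t_s=1$, $c_\ell/c_{\ell+1}\to 1$ and $R=1$, and argue by contradiction: suppose $f$ is analytic at $t=1$, hence holomorphic on an open set containing the open unit disk together with a disk around $1$.

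The first reduction I would attempt is to pass to $h(t):=(1-t)f(t)=\sum_{\ell\ge 0}(c_\ell-c_{\ell-1})t^\ell$ (with $c_{-1}:=0$). Since $c_\ell-c_{\ell-1}=c_{\ell-1}(c_\ell/c_{\ell-1}-1)$ with $c_\ell/c_{\ell-1}\to 1$, one checks $\limsup_\ell|c_\ell-c_{\ell-1}|^{1/\ell}\le 1$, so $h$ has radius of convergence $\ge 1$. If it is \emph{strictly} larger than $1$, the argument closes: then $\sum_\ell|c_\ell-c_{\ell-1}|<\infty$, the telescoping partial sums $c_N=\sum_{\ell\le N}(c_\ell-c_{\ell-1})$ converge to some $c_\infty=h(1)$, and if $c_\infty\ne 0$ then $f=h/(1-t)$ has a simple pole at $t=1$ (contradicting analyticity there), while if $c_\infty=0$ then $h(t)=(1-t)\tilde h(t)$ with $\tilde h$ analytic on a disk of radius $>1$, so $f=\tilde h$ has radius of convergence $>1$ (contradicting $R=1$).

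The remaining case --- $(1-t)f(t)$, and likewise every $(1-t)^m f(t)$, still has radius of convergence exactly $1$ --- is the genuine content of Fabry's theorem, and I expect it to be the main obstacle. Here the naive analogue of Pringsheim's argument fails: one would expand $f$ about a real point $r<1$, observe that this expansion reaches past $t=1$, and rearrange a double series to deduce that $\sum_\ell c_\ell\rho^\ell$ converges for some $\rho>1$; but that double series is dominated only by $\sum_\ell|c_\ell|\rho^\ell$, which \emph{diverges} for $\rho>1$, so Fubini does not apply and the sign-indefiniteness of the $c_\ell$ genuinely matters. Fabry's device is to control $f$ near $t=1$ by an Abel-summation / averaging estimate that exploits $c_{\ell+1}/c_\ell\to 1$ to show that analyticity at $t=1$ would force $\limsup_\ell|c_\ell|^{1/\ell}<1$, contradicting $R=1$; I would carry out this last step following \cite{fabry1896points} or \cite{dienes1957taylor}.
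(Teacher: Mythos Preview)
The paper does not prove this theorem at all: its entire proof reads ``See \cite{fabry1896points}.'' Your proposal ultimately does the same thing for the substantive part --- you defer the genuine content (that analyticity at $t=1$ forces $\limsup|c_\ell|^{1/\ell}<1$) to Fabry's original paper or Dienes --- so in that sense your approach matches the paper's.

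That said, a brief comment on the extra material you added. The ratio-test argument for the first part is fine and is exactly what the paper sketches in the paragraph preceding the theorem. The $(1-t)f(t)$ reduction, however, is a detour that buys you nothing: multiplying by $(1-t)$ never increases the radius of convergence past $1$ unless $t=1$ happens to be the \emph{only} singularity on the circle, and Fabry's theorem is precisely about the generic situation where other singularities sit on $|t|=1$ as well. So the ``remaining case'' you isolate is in fact the only case, and your case split does not reduce the problem. You recognize this yourself, which is good, but in a write-up I would drop that paragraph entirely and go straight from the normalization $t_s=1$ to the citation.
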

\begin{proof}
See \cite{fabry1896points}.
\end{proof}
We now briefly discuss the behaviour of type $(L,M)$ Pad\'e approximants in the presence of poles and branch points and end the section with two illustrative examples.
\subsection{Pad\'e approximants and nearby poles}
Since Pad\'e approximants are rational functions, it is reasonable to expect that they can capture this kind of behaviour quite well. The following theorem, due to de Montessus \cite{de1902fractions}, gives strong evidence of this intuition.
\begin{theorem}
Suppose $x_j(t)$ is meromorphic in the disk $\{t^* \in \C ~|~ |t^*| \leq r \}$, with $\mu$ distinct poles $z_1, \ldots, z_\mu \in \C$ in the punctured disk $\{ t^* \in \C \setminus \{0\} ~|~ |t^*|< r\}$. Furthermore, suppose that $m_i$ is the multiplicity of the pole $z_i$ and $\sum_{i=1}^\mu m_i = M$. Then $\lim_{L \rightarrow \infty} [L/M]_{x_j} = x_j$ on any compact subset of $\{ t^* \in \C ~|~ |t^*| \leq r, t^* \neq z_i, i = 1, \ldots, \mu \}$.
\end{theorem}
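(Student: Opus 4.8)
The plan is to follow the classical route of de Montessus \cite{de1902fractions} (see also \cite[Chapter~6]{baker1996pade}): reduce everything to the claim that the \emph{denominators} of the Pad\'e approximants converge to the polynomial whose zeros are the poles $z_i$, and then bootstrap to convergence of $[L/M]_{x_j}$ itself. Write $f = x_j$, $f(t) = \sum_{\ell \geq 0} c_\ell t^\ell$, and set $Q^*(t) = \prod_{i=1}^\mu (1 - t/z_i)^{m_i}$, a polynomial of degree exactly $M$, normalised by $Q^*(0) = 1$, having a zero of order $m_i$ at each $z_i$ and no other zeros. Then $g := Q^* f$ extends analytically to $\{|t| \leq r\}$, so its Taylor coefficients decay geometrically: fixing $\rho$ with $\max_i |z_i| < \rho < r$, one has $g_\ell = O(\rho^{-\ell})$. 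Denote by $p_L, q_L$ the numerator and denominator of $[L/M]_f$; the bulk of the argument is to show $q_L \to Q^*$ coefficientwise (hence uniformly on compacta, as $\deg q_L \leq M$).

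For this I would compare two linear systems. The denominator $q_L = \sum_{k=0}^M b_k t^k$ is characterised up to scaling by the homogeneous Hankel system $\sum_{k=0}^M b_k c_{L+j-k} = 0$, $j = 1, \ldots, M$ (vanishing of the coefficients of $t^{L+1}, \ldots, t^{L+M}$ in $f q_L$), while the coefficients $\beta_k$ of $Q^*$ satisfy the inhomogeneous version $\sum_{k=0}^M \beta_k c_{L+j-k} = g_{L+j}$ obtained by reading off the coefficient of $t^{L+j}$ in $Q^* f = g$. Granting for the moment that the $M \times M$ Hankel matrix $A^{(L)} := (c_{L+j-k})_{j,k=1}^M$ is invertible for all large $L$, one checks $b_0 \neq 0$ (otherwise $(b_1,\dots,b_M)$ would lie in the kernel of $A^{(L)}$, forcing $q_L \equiv 0$), so we may normalise $q_L(0) = 1$ and subtract the two systems to get $A^{(L)} e^{(L)} = -(g_{L+1}, \ldots, g_{L+M})^{\top}$ with $e^{(L)} = (b_1 - \beta_1, \ldots, b_M - \beta_M)^{\top}$. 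As the right-hand side has norm $O(\rho^{-L})$, it then suffices to bound $\|(A^{(L)})^{-1}\|$.

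The main obstacle is precisely this quantitative nondegeneracy of the Hankel matrix. I would handle it by splitting $f = R + h$, where $R = P/Q^*$ with $\deg P < M$ collects the principal parts of $f$ at the $z_i$ and $h$ is analytic on $\{|t| \leq r\}$; then $c_\ell = r_\ell + h_\ell$ with $h_\ell = O(\rho^{-\ell})$ and $r_\ell = \sum_{i=1}^\mu P_i(\ell)\, z_i^{-\ell}$ for polynomials $P_i$ with $\deg P_i = m_i - 1$. Correspondingly $A^{(L)} = A_R^{(L)} + A_h^{(L)}$ with $A_h^{(L)}$ geometrically small, while $A_R^{(L)}$ admits a confluent-Vandermonde factorisation (up to polynomial-in-$L$ corrections from the $P_i$) whose diagonal middle factor carries the entries $z_i^{-1}$ with multiplicity $m_i$. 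Carrying out this determinant computation — the technical heart, and essentially Baker's argument — yields $|\det A_R^{(L)}| \asymp \bigl(\prod_i |z_i|^{-m_i}\bigr)^{L}$ up to a subexponential factor, where it is essential that $\sum_i m_i = M$ (i.e.\ that $f$ has exactly $M$ poles in the disk); hence $A^{(L)}$ is invertible for large $L$, and via Cramer's rule (the purely geometric blocks in the $(M-1)\times(M-1)$ minors are rank-deficient, which lowers their size) one gets $\|(A^{(L)})^{-1}\| = O\bigl((\max_i |z_i|)^{L}\, \mathrm{poly}(L)\bigr)$. Combining with the bound on the right-hand side, $\|e^{(L)}\| = O\bigl((\max_i|z_i|/\rho)^{L}\,\mathrm{poly}(L)\bigr) \to 0$, so $q_L \to Q^*$; in particular, on any fixed compact $K \subset \{|t| \leq r\} \setminus \{z_1,\dots,z_\mu\}$ we have $|q_L| \geq \tfrac{1}{2}\min_K|Q^*| > 0$ for $L$ large. (An alternative to the explicit determinant estimate is Saff's normal-families contradiction argument, but it needs the same nonvanishing input.)

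Finally I would pass from denominator to numerator through the identity $Q^*\,(q_L f - p_L) = q_L g - Q^* p_L$. Since $q_L f - p_L = O(t^{L+M+1})$ and $\deg(Q^* p_L) \leq L + M$, this says $Q^* p_L$ equals the degree-$(L+M)$ Taylor polynomial $T_{L+M}[q_L g]$ of the \emph{analytic} function $q_L g$. Writing $q_L g = Q^* g + (q_L - Q^*) g$ and using that $T_{L+M}[Q^* g] \to Q^* g$ locally uniformly in $\{|t| < r\}$, while $\|(q_L - Q^*) g\|$ and therefore $\|T_{L+M}[(q_L - Q^*) g]\|$ tend to $0$ on compacta (the coefficients of $q_L - Q^*$ vanish geometrically and $g$ is analytic), I obtain $Q^* p_L \to Q^* g$ locally uniformly, hence $p_L \to g$ uniformly on $K$, and therefore $[L/M]_f = p_L/q_L \to g/Q^* = f$ uniformly on $K$. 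I expect the determinant asymptotics of the third step to be the only genuinely laborious ingredient; everything else is bookkeeping with geometric series.
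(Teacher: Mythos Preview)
Your sketch is correct and follows exactly the classical de~Montessus route as presented in Baker--Graves-Morris~\cite[Chapter~6]{baker1996pade}: prove $q_L \to Q^*$ via the Hankel/confluent-Vandermonde determinant estimate, then deduce $p_L \to Q^* f$ on compacta away from the poles. The paper itself does not give an argument at all; its proof consists solely of the citation ``This is Theorem~6.2.2 in~\cite{baker1996pade},'' so you have in fact supplied substantially more than the paper does, and what you supply is the same proof the paper is pointing to.
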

\begin{proof}
This is Theorem 6.2.2 in \cite{baker1996pade}.
\end{proof}
Loosely speaking, this tells us that the poles of $[L/M]_{x_j}$, for large enough $L$, will converge to the $M$ most nearby poles of $x_j(t)$ (counting multiplicities), if these are the only singularities encountered in the disk $\{t^* \in \C ~|~ |t^*| \leq r \}$. For the $[L/1]_{x_j}$ approximant, this means that convergence may be expected beyond the nearest singularity if this is a simple pole, and the pole of $[L/1]_{x_j}$ will approximate the actual nearby pole. This may be considered as a practical approach to analytic continuation \cite{trefethen2019quantifying}. Pad\'e approximants also give answers to the inverse problem: the asymptotic behaviour of the poles of $[L/M]_{x_j}$ as $L \rightarrow \infty$ can be used to describe meromorphic continuations of the function $x_j(t)$. We do not give any details here, the interested reader is referred to \cite{gonchar1981poles,suetin1985inverse,vavilov1985poles}.
\subsection{Pad\'e approximants and nearby branch points}
Many singularities encountered in polynomial homotopy continuation are not poles, but branch points. This situation is more subtle since the Pad\'e approximant, being a rational function, cannot have branch points. For an intuitive description of the behaviour of Pad\'e approximants for functions with multi-valued continuations, the reader may consult \cite[Section 2.2]{baker1996pade}. The conclusion is that the poles and zeros of $[L/M]_{x_j}$ are expected to delineate a `natural' branch cut. The authors also describe some ways to estimate the location and winding number of branch points using Pad\'e approximants. We should also mention that there are convergence results in the presence of branch points which involve potential theory. We refer to \cite{stahl1997convergence} for some important results for convergence of sequences of Pad\'e approximants with $L, M \rightarrow \infty$, $L/M \rightarrow 1$ (so-called \textit{near-diagonal} sequences). These results are beyond the scope of this paper, mainly because we will limit ourselves to  \textit{near-polynomial} approximants: we allow only a small number of poles (often we even take $M = 1$) and we will estimate nearby singularities directly from $[L/M]_{x_j}$. This is an unusual choice, since near-diagonal approximants tend to show better behavior for the approximation of algebraic functions (see, e.g.\, \cite[Section 6.2]{nakatsukasa2018aaa}). The reason for this choice will become clear from the example in Section \ref{ex:nearpol}.

We will show in experiments that in this way, even for small $L$, we can predict at least the order of magnitude of the distance to the nearest branch point, which is enough to ring an alarm when this distance gets small, and often we can do much better. 

The reason for limiting ourselves to a small number of parameters $L + M$ and for not trying to compute a very accurate location of the nearest branch point and its winding number is of course efficiency. Moreover, for the purpose of this paper a local approximation of the coordinate functions and a rough estimate of the nearest singularity suffice. The above mentioned techniques to compute more information about nearby branch points may be powerful for approximation of algebraic curves in compact regions of the complex plane and for computing monodromy groups. We leave this as future research.

\subsection{Examples}
Our first example shows the potential of using Pad\'e approximants for locating nearby singularities in the parameter space. The second one motivates the choice of type $(L,1)$ approximants over near-diagonal approximants. 
\subsubsection{Pad\'e approximants for a family of hyperbolas}\label{ex:padeex}
We consider again the homotopy \eqref{famofhyperb} from Example \ref{ex:sec2}. Let us first take $p = 0.19$ and consider the smooth parameter path $\Gamma_3$. It is clear that the singularity $z_{+} = 1/2 + p\sqrt{-1} \in S$ is the closest singularity to nearly every point in $\Gamma_3([0,1])$. As $s$ moves closer to $1/2$, it moves closer to $z_{+}$. To show how this causes difficulties for the local approximation using Pad\'e approximants, we have performed the following experiment. For several points $t^*$ on the parameter path $\Gamma_3([0,1])$ we have plotted the contour in $\C$ where the absolute value of the difference between $x(t) = \sqrt{(t-1/2)^2+p^2}$ and its type $(6,1)$ Pad\'e approximation around $t^*$ equals $10^{-4}$. The result is shown in Figure \ref{fig:padecontours}. It is clear that the local approximation can be `trusted' in a much larger region if the singularity is far away. 
\begin{figure}[h!]
\centering
\includegraphics[scale=0.7]{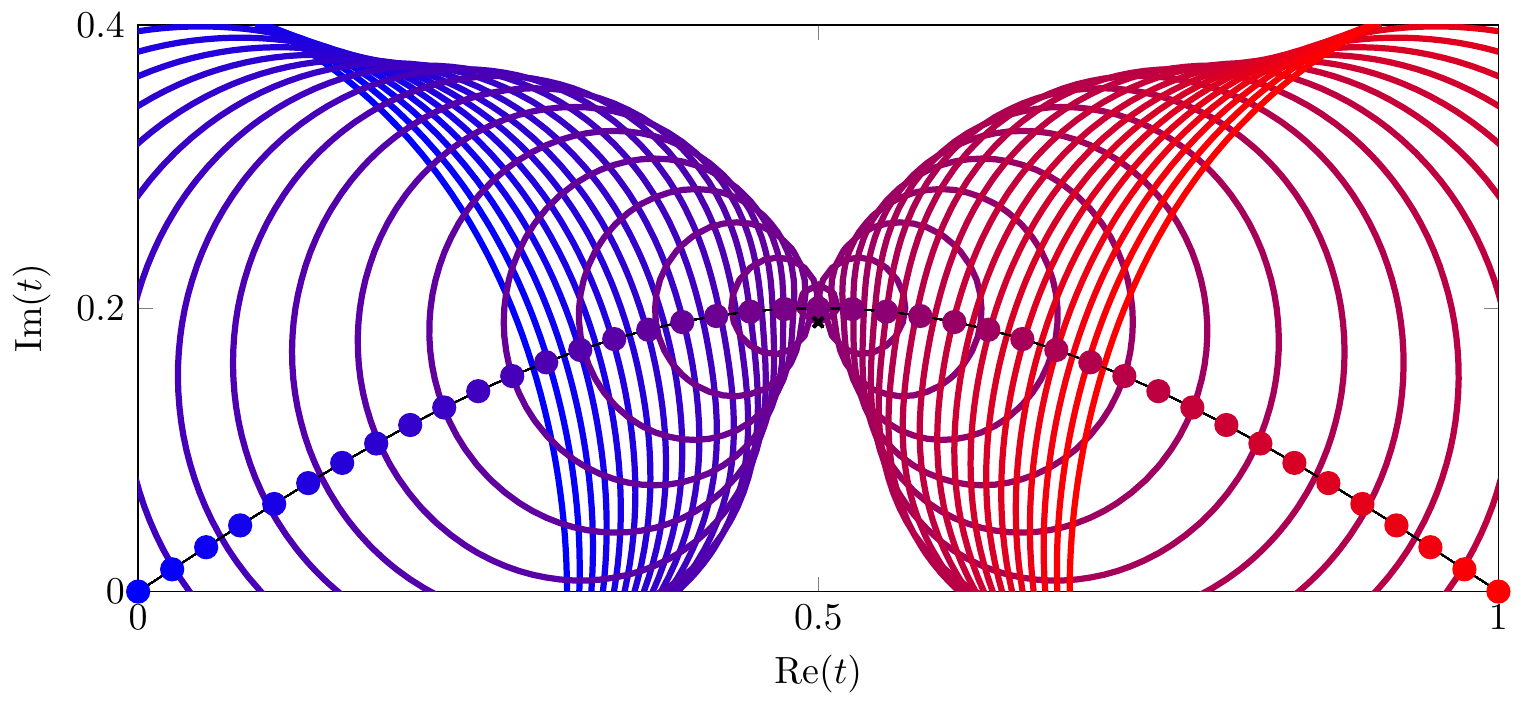}
\caption{Contours of the approximation error as described in Section \ref{ex:padeex}. The colour of the contours correspond to the colour of the dots on the parameter path they correspond to. The singularity $z_{+}$ is shown as a small black cross. }
\label{fig:padecontours}
\end{figure}

We now investigate the behaviour of the pole of $[L/1]_{x}$ as we move along the path. We consider the four cases defined by $p = 0.15, 0.19$ and $L = 2, 6$. The results are shown in Figure \ref{fig:polepaths}.
\begin{figure}[h!]
\centering
\includegraphics[scale=0.8]{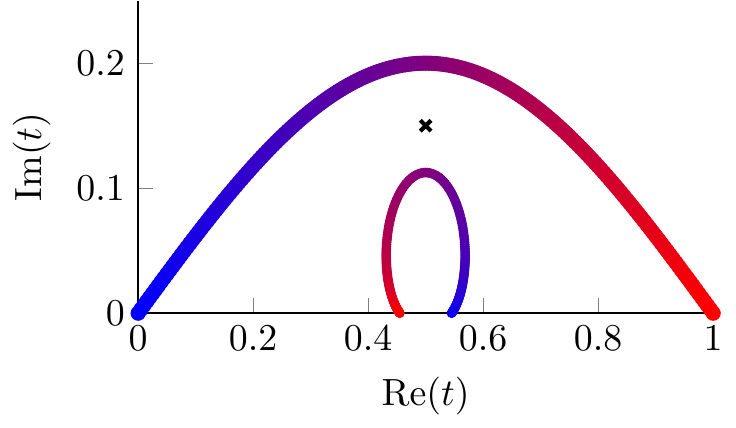}
\includegraphics[scale=0.8]{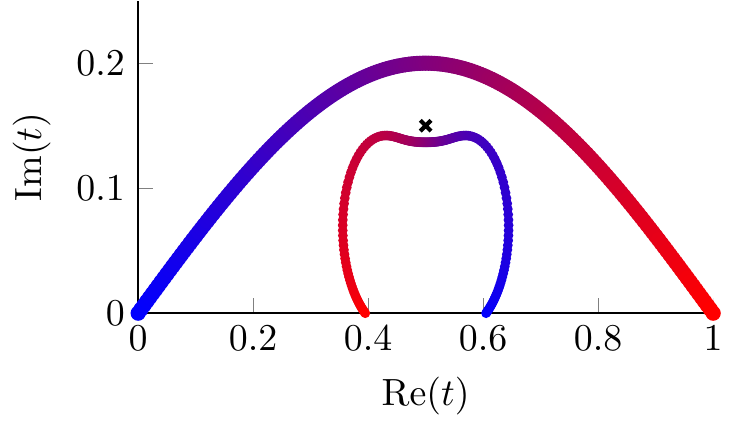}
\centering
\includegraphics[scale=0.8]{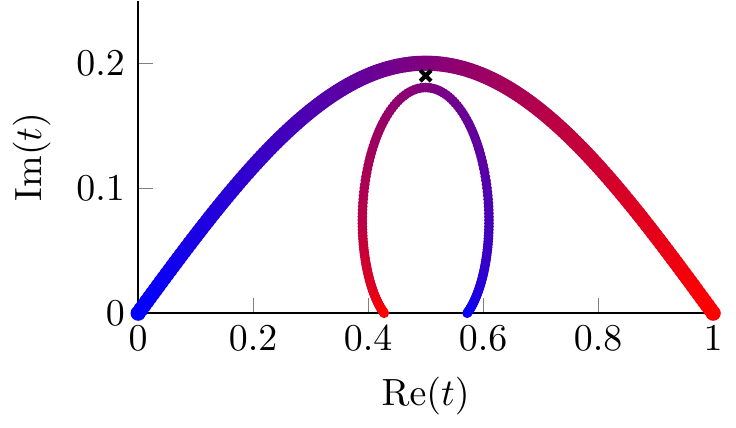}
\includegraphics[scale=0.8]{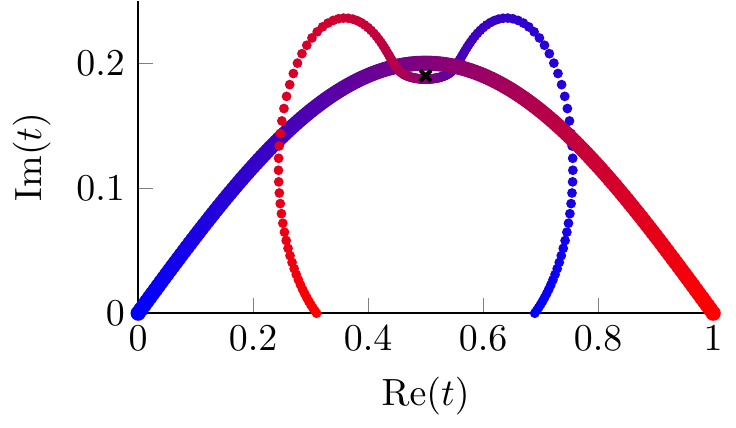}
\caption{The path $\Gamma_3([0,1])$ and the corresponding path described by the pole of the type $(L,1)$ Pad\'e approximant (associated points on the two paths have been given the same colour) for $p = 0.15$ (first row), $p = 0.19$ (second row), $L = 2$ (left column), $L = 6$ (right column).}
\label{fig:polepaths}
\end{figure}
The figure shows that as we move closer to $\Gamma(0.5)$ on the path, the pole of the Pad\'e approximant moves closer to the actual branch point. What's important is that in the trouble region of the path ($s$ close to $0.5$), the pole of $[L/1]_{x}$ is fairly close to $z_+$. It gives, at least, an indication of the order of magnitude of the distance to $z_+$. Another way to see this is that on a point of the path near to $z_{+}$, the $(L,1)$ Pad\'e approximant is not so much influenced by the presence of $z_{-}$. For instance, at $t^* = 0$, the pole is real because $z_{+}$ and $z_{-}$ are complex conjugates and they are located at the same distance from $\Gamma_3(0)$. For $t^*$ near $\Gamma_3(0.5)$, the pole has a relatively large positive imaginary part. Comparing the first row to the second row in the figure shows that this effect gets stronger when a singularity moves closer to the path. Comparing the left column to the right column we see that the approximation of $z_{+}$ gets better as $L$ increases, which is to be expected. If we use $\Gamma_1$ instead of $\Gamma_3$, for whatever $p$, the branch points $z_{+}$ and $z_{-}$ will have the same distance to each point of the path. The result is that the $(L,1)$ Pad\'e approximant will have poles on the real line. For $L = 4$, $p = 0.001$, $t^* \in [0,1]$, the pole is contained in the real interval $[0.4997, 0.5003]$, so the local difficulties are detected. However, in this specific situation, it is more natural to use type $(L,2)$ approximants. The result for $L = 6$, $p = 0.05$ is shown in Figure \ref{fig:twopoles}.
\begin{figure}[h!]
\centering
\includegraphics[scale=0.8]{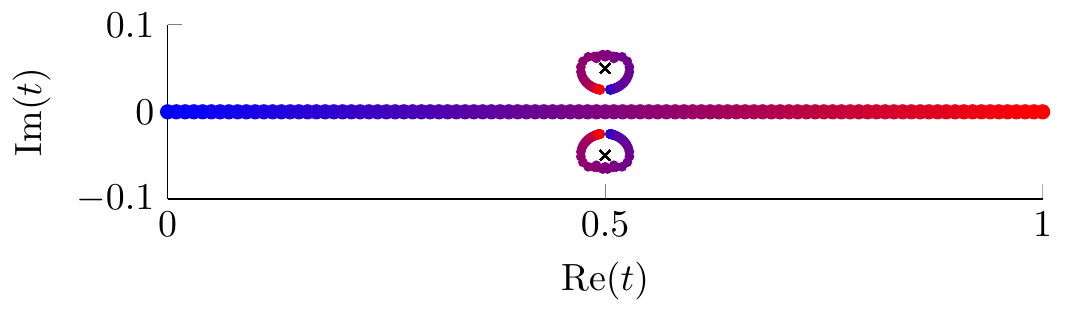}
\caption{The path $\Gamma_1([0,1])$ and the corresponding paths described by the poles of the type $(6,2)$ Pad\'e approximant (associated points on the two paths have been given the same colour) for $p = 0.05$.}
\label{fig:twopoles}
\end{figure}
We note that in a randomized homotopy, it is not to be expected that at a general point of the path two poles are equally important. As we move along the path, the most important singularity may change, and the type $(L,1)$ approximant can be expected to relocate its pole accordingly. 

\subsubsection{Near-diagonal VS near-polynomial approximants} \label{ex:nearpol}
Consider the algebraic function $x(t) = \sqrt{(t+1.01)(t^2-t+37/4)}$
with branch points $$S = \{ -1.01, 1/2+3 \sqrt{-1}, 1/2-3 \sqrt{-1} \}.$$ For $\ell = 1, \ldots, 13$, we compute both the type $(\ell,\ell)$ and the type $(2 \ell - 1, 1)$ Pad\'e approximant (around $t = 0$) of $x(t)$ using a Matlab implementation of the algorithm in \cite{gonnet2013robust}. For all these approximants we compute
\begin{enumerate}
\item the minimum of the distances of the poles of the Pad\'e approximant to the branch point $-1.01$,
\item the difference between the smallest modulus of the poles of the Pad\'e approximant and the modulus of the nearest branch point, which is $1.01$, 
\item an estimate for the approximation error (the infinity norm of a discretized approximation) of $x(t)$ on the disk $|t| \leq 1/2$ in the complex plane.
\end{enumerate} 
Results are shown in Figure \ref{fig:nearpol1}. The right part of the figure shows that the diagonal approximants behave better for function approximation. However, for small $\ell$, the near-polynomial approximants are competitive.
\begin{figure}
\centering
%
%
\definecolor{mycolor1}{rgb}{0.00000,0.44700,0.74100}%
\definecolor{mycolor2}{rgb}{0.85000,0.32500,0.09800}%
\begin{tikzpicture}

\begin{axis}[%
width=1.9in,
height=1.9in,
at={(0.758in,0.481in)},
scale only axis,
xmin=0,
xmax=14,
xlabel style={font=\color{white!15!black}},
xlabel={$\ell$},
ymode=log,
ymin=0.01,
ymax=10,
yminorticks=true,
title = {Quantities 1, 2},
axis background/.style={fill=white},
legend style={legend cell align=left, align=left, draw=white!15!black}
]

\addplot [color=blue, line width=1.0pt, mark size=1.5pt, mark=*, mark options={solid, blue}]
  table[row sep=crcr]{%
1	3.55029346646134\\
2	0.690008259708782\\
3	0.690027864437504\\
4	0.215094160433947\\
5	0.135181933172548\\
6	0.0818596909688847\\
7	0.0819956040401197\\
8	0.0536008856040924\\
9	0.0419410914492058\\
10	0.0299419829133043\\
11	0.0304443756620696\\
12	0.0240923408847906\\
13	0.0202694039290052\\
};
\addlegendentry{$(\ell,\ell)$}

\addplot [color=red, line width=1.0pt, mark size=1.5pt, mark=*, mark options={solid, red}]
  table[row sep=crcr]{%
1	3.55029346646134\\
2	1.03240868195122\\
3	0.330030653105815\\
4	0.244713720335456\\
5	0.182255253681943\\
6	0.146944543999776\\
7	0.122951105624548\\
8	0.105734977989921\\
9	0.0927560906104445\\
10	0.0826203865602442\\
11	0.0744844407192573\\
12	0.0678089002459683\\
13	0.0622325654514684\\
};
\addlegendentry{$(2\ell-1,1)$}

\addplot [color=blue, line width=1.0pt, mark size=1.5pt, mark=*, dashed, mark options={solid, blue}]
  table[row sep=crcr]{%
1	3.55029346646134\\
2	0.690008259708782\\
3	0.995368396641634\\
4	0.215094160433947\\
5	0.135181933172548\\
6	0.0818596909688847\\
7	0.875631757347421\\
8	0.0536008856040924\\
9	0.0419410914492058\\
10	0.0299419829133043\\
11	0.41772030085597\\
12	0.0240923408847906\\
13	0.0202694039290052\\
};

\end{axis}
\end{tikzpicture}
%
%
\definecolor{mycolor1}{rgb}{0.00000,0.44700,0.74100}%
\definecolor{mycolor2}{rgb}{0.85000,0.32500,0.09800}%
\begin{tikzpicture}

\begin{axis}[%
width=1.9in,
height=1.9in,
at={(0.758in,0.481in)},
scale only axis,
xmin=0,
xmax=14,
xlabel style={font=\color{white!15!black}},
xlabel={$\ell$},
ymode=log,
ymin=0.000000000000001,
ymax=1,
yminorticks=true,
title = {Approximation error on $|t| \leq 1/2$},
axis background/.style={fill=white},
legend style={legend cell align=left, align=left, draw=white!15!black}
]
\addplot [color=blue, line width=1.0pt, mark size=1.5pt, mark=*, mark options={solid, blue}]
  table[row sep=crcr]{%
1	0.0404876645391476\\
2	0.000588869582628253\\
3	0.000589181871446254\\
4	4.81405926354243e-06\\
5	2.02251561743871e-07\\
6	2.60678709576999e-09\\
7	2.74671042156607e-09\\
8	2.2006445988799e-11\\
9	8.48151132496993e-13\\
10	2.77478029661332e-14\\
11	3.33843620449077e-14\\
12	3.99760463597645e-14\\
13	4.22498172960025e-14\\
};
\addlegendentry{$(\ell,\ell)$}

\addplot [color=red, line width=1.0pt, mark size=1.5pt, mark=*, mark options={solid, red}]
  table[row sep=crcr]{%
1	0.0404876645391476\\
2	0.00165731358148035\\
3	5.48654548124185e-05\\
4	7.72279146882832e-06\\
5	8.68666888186105e-07\\
6	1.20232593382817e-07\\
7	1.78676309064628e-08\\
8	2.83431871545509e-09\\
9	4.71857344525444e-10\\
10	8.16543033770869e-11\\
11	1.45833742145036e-11\\
12	2.66854612449981e-12\\
13	4.88408046223755e-13\\
};
\addlegendentry{$(2\ell-1,1)$}

\end{axis}
\end{tikzpicture}%
\caption{Results of the experiment in Section \ref{ex:nearpol}.}
\label{fig:nearpol1}
\end{figure}
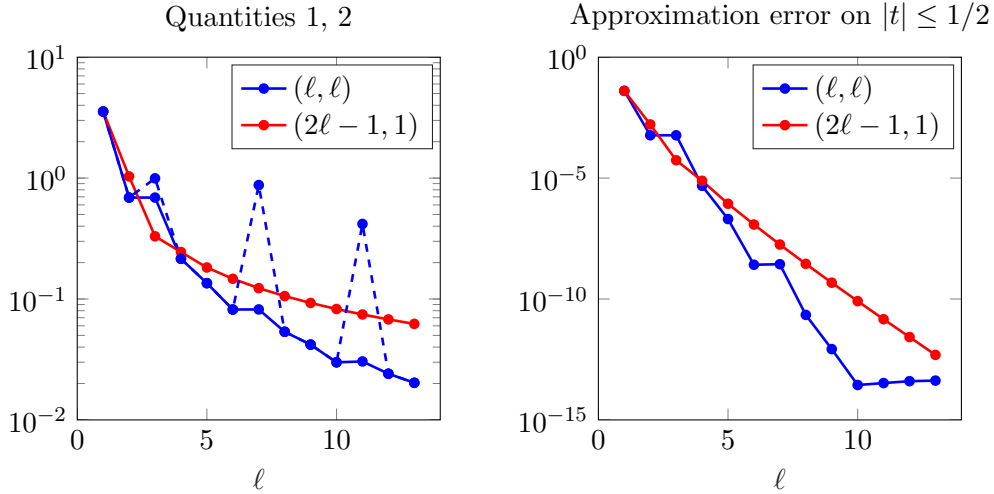
For the type $(2\ell - 1, 1)$ approximant, the first two quantities coincide since the pole is real. For the $(\ell,\ell)$ case, the first quantity is a lower bound for the second one. This is illustrated by the difference between the dashed and the full blue line in Figure \ref{fig:nearpol1}. What happens is the following. One of the poles of the type $(\ell,\ell)$ approximant approximates the branch point $-1.01$, but some other pole indicates that there could be a branch point with smaller modulus. This is illustrated in Figure \ref{fig:nearpol2} for $\ell = 3,4$ (for $\ell = 4$, one of the poles of the $(\ell, \ell)$ approximant lies close to that of the $(2\ell-1,1)$ approximant and the corresponding dot is nearly invisible). The pole of the type $(3,3)$ approximant that is closest to the origin actually comes from a Froissart doublet which was not detected using the default settings in the algorithm of \cite{gonnet2013robust}. As a consequence, this spurious pole would tell us that a singularity is nearby such that only a small step can be taken (see Section \ref{subsubsec:trust}), while the actual branch point is quite far away. Detecting such Froissart doublets is often tricky. Since we will use only low orders, the approximation quality of the $(L,1)$ approximant suffices for our purpose. Moreover, this example shows that they are more robust for estimating the distance to the nearest singularity. We will use this type of approximants for our default settings. \\
\begin{figure}[h!]
\centering
\includegraphics[scale=0.60]{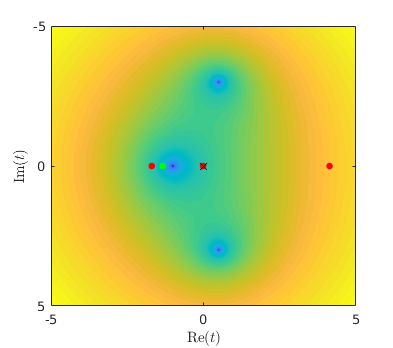}
\includegraphics[scale=0.60]{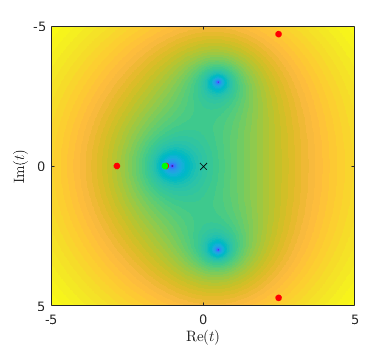}
\caption{Poles of the type $(\ell,\ell)$ approximant (red dots) and pole of the type $(2\ell-1,1)$ approximant (green dot) for $\ell = 3,4$ (left and right respectively). The origin is indicated with a black cross. The background color corresponds to $|x(t)|$.}
\label{fig:nearpol2}
\end{figure}

\section{Computing Power Series Solutions} \label{sec:powerseries}
In this section we present the algorithm for computing a power series solution of $H(x,t)$ at $t^* = 0$ proposed in \cite{bliss2018method} and prove a result of convergence. An analogous result for the case $n=1$ can be found in \cite{lipson1976newton}. We will consider the situation where the series solution has the form \eqref{eq:param} with parameters satisfying $\omega_i \geq 0$. Futhermore, we assume that the winding number $m$ is known. If this is not the case, $m$ can be computed by using, for instance, monodromy loops. Note that it is sufficient to consider the case where $m = 1$, since if $m$ is known and $m > 1$ we can consider the homotopy 
\begin{equation*}
\hat{H}(x, \tau) = (h_1(x,\tau^m), \ldots, h_n(x,\tau^m)) 
\end{equation*}
with power series solution
\begin{equation*} 
\begin{cases}
x_j(s) = a_js^{\omega_j} \left (1 + \sum_{\ell=1}^\infty a_{j \ell}s^\ell \right ), \quad j = 1, \ldots, n \\
\tau(s) = s
\end{cases}.
\end{equation*}
Therefore, we can avoid introducing the extra parameter $s$ and the unknown power series solution is given by 
\begin{equation} \label{eq:simplepowersol}
x_j(t) = a_j t^{\omega_j} \left ( 1 +\sum_{\ell=1}^\infty a_{j \ell}t^\ell \right ),j = 1, \ldots, n. 
\end{equation}
We think of $H(x,t)$ as a column vector $[h_1 ~ \cdots ~ h_n]^\top$ in $R[[t]]^n \simeq R^n[[t]]$ and the Jacobian matrix $J_H(x,t)$ is considered an element of $R[[t]]^{n \times n} \simeq R^{n \times n}[[t]]$. For any $h(x,t) \in R[[t]]^n$, plugging in $y(t) \in \C[[t]]^n$ gives $h(y(t),t) \in \C[[t]]^n$, and the same can be done for $J(x,t) \in R[[t]]^{n \times n}$, which gives $J(y(t),t) \in \C[[t]]^{n \times n}$.
\begin{definition}
Let $\star$ be either $\C^n$ or $\C^{n \times n}$. For $v = \sum_{\ell=0}^\infty v_\ell t^\ell \in \star[[t]] \setminus \{0\}$, the \textup{order} of $v$ is 
\begin{equation*}
\ord(v) = \min_{\ell} \{v_\ell \neq 0 \},
\end{equation*}
where $v_\ell \in \star, \ell \in \N$. For $w \neq v \in \star [[t]]$ we denote $v = w + O(t^k)$ if $\ord(v-w) \geq k$. For $v = 0$, we define $\ord(v) = \infty$.
\end{definition}
Note that this means that for a vector or matrix $v$ with power series entries $v = O(t^k)$ if and only if every entry of $v$ is in $\m^k$, where $\m$ is the maximal ideal of $\C[[t]]$. With elementwise addition and multiplication in $\C[[t]]^n$ and the usual addition and multiplication in $\C[[t]]^{n \times n}$, it is clear that for $v, w \in \star [[t]]$, $\ord(v) = \ord(-v)$, $\ord(v+w) \geq \min(\ord(v),\ord(w))$ and $\ord(vw) \geq \ord(v) + \ord(w)$. For the product rule, equality holds if $\star = \C^n$. Matrix-vector multiplication $\C[[t]]^{n \times n} \times \C[[t]]^n \rightarrow \C[[t]]^n$ is defined in the usual way and for $M \in \C[[t]]^{n \times n}, v \in \C[[t]]^n$ we have $\ord(Mv) \geq \ord(M) + \ord(v)$.

Given $x^{(0)}(t) = (x_1^{(0)}(t), \ldots, x_n^{(0)}(t)) \in \C[[t]]^n$, fix positive integers $w_k \in \N \setminus \{0\}$ and consider the sequence $\{x^{(k)}(t) \}_{k\geq 0}$ defined by
\begin{align} \label{eq:newtoniteration}
\tilde{x}^{(k+1)}(t) &= x^{(k)}(t) - J_H(x^{(k)}(t),t)^{-1} H(x^{(k)}(t),t) = \sum_{\ell=0}^\infty b_\ell t^\ell, \nonumber\\
x^{(k+1)}(t) &= \sum_{\ell=0}^{w_k-1} b_\ell t^\ell
\end{align}
where we assume that $J_H(x^{(k)}(t),t)$ is a unit in $\C[[t]]^{n \times n}$ for all $k$ and this is equivalent to assuming that $J_H(x^{(k)}(0),0) \in \textup{GL}(n,\C)$ for all $k \geq 0$. The iteration is clearly based on the well known Newton-Raphson iteration for approximating a root of a nonlinear system of equations. The following theorem specifies the statement that the iteration has similar `quadratic' convergence properties. It is related to a multivariate version of Hensel lifting, see for instance \cite[Exercise 7.26]{eisenbud2013commutative}.
\begin{theorem} \label{thm:powersol}
Let $H(x,t): X \times \C \rightarrow \C$ be a homotopy with power series solution $x(t) \in \C[[t]]^n$ given by \eqref{eq:simplepowersol} and let $\{x^{(k)}(t) \}_{k \geq 0}$ be a sequence generated as in \eqref{eq:newtoniteration}. If $J_H(x^{(k)}(t),t)$ is a unit in $\C[[t]]^{n \times n}$ for all $k \geq 0$ then 
\begin{equation*}
\ord(x^{(k+1)}(t) - x(t)) \geq \min (2 \ord(x^{(k)}(t) - x(t)), w_k), \quad k \geq 0. 
\end{equation*}
\end{theorem}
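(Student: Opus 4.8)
The plan is to run the classical Newton/Hensel lifting argument directly in the power series ring, tracking \emph{orders} in place of norms. Write $e^{(k)}(t) = x^{(k)}(t) - x(t)$ for the error after the $k$-th iterate and put $m = \ord(e^{(k)}(t))$, allowing $m = \infty$ in the degenerate case $x^{(k)}(t) = x(t)$ (there $H(x^{(k)}(t),t) = 0$, so $\tilde{x}^{(k+1)}(t) = x^{(k)}(t) = x(t)$ and the bound $\min(2\cdot\infty, w_k) = w_k$ holds trivially after truncation). Assume now $m < \infty$. Rewriting the Newton step \eqref{eq:newtoniteration} and using that $J_H(x^{(k)}(t),t)$ is a unit (so its inverse also has order $0$), we get
\begin{equation*}
\tilde{x}^{(k+1)}(t) - x(t) = -\,J_H(x^{(k)}(t),t)^{-1}\Big[\,H(x^{(k)}(t),t) - J_H(x^{(k)}(t),t)\,e^{(k)}(t)\,\Big],
\end{equation*}
so it suffices to show that the bracketed quantity has order at least $2m$.

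Next I would expand $H$ and $J_H$ about the true solution $x(t)$. Since $H$ and each $\partial_{x_i} h_j$ lie in $R[t]$ with $R$ a (Laurent) polynomial ring, substituting $x(t)+e^{(k)}(t)$ gives an \emph{exact finite} Taylor expansion
\begin{equation*}
H(x(t)+e^{(k)}(t),t) = H(x(t),t) + J_H(x(t),t)\,e^{(k)}(t) + Q(t), \qquad \ord(Q(t)) \geq 2m,
\end{equation*}
where $Q$ collects the terms of multidegree $\geq 2$ in $e^{(k)}$; the order bound comes from $\ord\!\big(e^{(k)}(t)^{\alpha}\big) \geq |\alpha|\,m$ and the product estimates for orders recalled just before the theorem, and $H(x(t),t) = 0$ by hypothesis. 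The same expansion applied to $J_H$ gives $J_H(x^{(k)}(t),t) - J_H(x(t),t) = O(t^{m})$. Substituting into the bracket,
\begin{equation*}
H(x^{(k)}(t),t) - J_H(x^{(k)}(t),t)\,e^{(k)}(t) = \big[\,J_H(x(t),t) - J_H(x^{(k)}(t),t)\,\big]\,e^{(k)}(t) + Q(t),
\end{equation*}
and both summands have order $\geq 2m$: the first as a matrix of order $\geq m$ times a vector of order $\geq m$, the second by the Taylor bound. Hence $\ord\big(\tilde{x}^{(k+1)}(t) - x(t)\big) \geq 2m = 2\,\ord(e^{(k)}(t))$.

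Finally, the truncation step in \eqref{eq:newtoniteration} only helps: $x^{(k+1)}(t) - \tilde{x}^{(k+1)}(t)$ is the tail of $\tilde{x}^{(k+1)}(t)$ from degree $w_k$ onward, so $\ord\big(x^{(k+1)}(t) - \tilde{x}^{(k+1)}(t)\big) \geq w_k$, and therefore
\begin{equation*}
\ord\big(x^{(k+1)}(t) - x(t)\big) \geq \min\!\big(\ord(\tilde{x}^{(k+1)}(t) - x(t)),\, w_k\big) \geq \min\!\big(2\,\ord(x^{(k)}(t) - x(t)),\, w_k\big),
\end{equation*}
which is the assertion.

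The step I expect to need the most care is the exact Taylor expansion in $\C[[t]]^n$ and the associated bookkeeping of orders — in particular verifying that the term $\big[J_H(x(t),t) - J_H(x^{(k)}(t),t)\big]e^{(k)}(t)$, which looks first order in $e^{(k)}$, is actually of order $2m$; this cancellation is exactly the mechanism that upgrades the linear Newton correction to a quadratic one. One must also keep in mind the standing well-definedness conventions of this section (that plugging a power-series vector into $H$ and into $\partial_{x_i}h_j$ lands in $\C[[t]]$, which is why the toric discussion is restricted to $\omega_i \geq 0$) and note that the argument degenerates gracefully when $e^{(k)} = 0$; neither is a genuine obstacle, and indeed the whole estimate is a mild repackaging of multivariate Hensel lifting.
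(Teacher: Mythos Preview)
Your proof is correct and follows essentially the same Newton/Hensel argument as the paper. The only cosmetic difference is that the paper expands $H$ about the approximate point $x^{(k)}(t)$ rather than about the true solution $x(t)$; that choice makes the linear term $J_H(x^{(k)}(t),t)\,e^{(k)}(t)$ appear directly and spares your extra step of comparing $J_H(x(t),t)$ with $J_H(x^{(k)}(t),t)$, but the mechanism and conclusion are identical.
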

\begin{proof}
We know that $x(t) = (x_1(t), \ldots, x_n(t))^\top \in \C[[t]]^n$ satisfies $H(x(t),t) = 0$. Take $x^{(k)}(t) \in \C[[t]]^n$ and define $e^{(k)}(t) = x^{(k)}(t) - x(t)$. We have 
\begin{equation} \label{eq:vectorcase}
0 = H(x^{(k)}(t) - e^{(k)}(t),t) = H(x^{(k)}(t),t) - J_H(x^{(k)}(t),t) e^{(k)}(t) + O(t^{2 \ord(e^{(k)}(t))}).
\end{equation}
By assumption, $J_H(x^{(k)}(t),t)$ is a unit and thus $\ord(J_H(x^{(k)}(t),t)^{-1}) = 0$. We now multiply \eqref{eq:vectorcase} from the left with $J_H(x^{(k)}(t),t)^{-1}$ and we get (using $e^{(k)}(t) = x^{(k)}(t) - x(t)$)
\begin{equation*}
 - J_H(x^{(k)}(t),t)^{-1} H(x^{(k)}(t),t) + (x^{(k)}(t) - x(t)) = O(t^{2 \ord(e^{(k)}(t))}).
 \end{equation*}
It follows that $\tilde{x}^{(k+1)}(t) - x(t) = O(t^{2 \ord(e^{(k)}(t))})$. So we find that 
\begin{equation*}
\ord(e^{(k+1)}(t)) \geq \min(2 \ord(e^{(k)}(t)), w_k).
\end{equation*}
\end{proof}
It follows that if $e^{(0)}(t)$ has order $\geq 1$, the iteration converges to the solution $x(t)$ and the order of the error doubles in every iteration, as long as the truncation orders $w_k$ allow for it. Also, if $\ord(e^{(0)}(t)) \geq 1$, $H(x^{(0)}(0),0) = 0$ and thus $\ord (H(x^{(0)}(t),t)) \geq 1$. It follows that the term $- J_H(x^{(k)}(t),t)^{-1} H(x^{(k)}(t),t) $ has order at least 1 and so the constant terms of $x^{(1)}$ and $x^{(0)}$ agree. This stays true for the following iterations as well. We conclude that if $\ord(e^{(0)}(t)) \geq 1$, the assumption that $J_H(x^{(k)}(t),t)$ is a unit for all $k$ translates to the assumption that $x^{(0)}(0) = a$ is a regular solution of the polynomial system defined by $H_0$.
If we want to compute a series solution that is accurate up to order $w$, and $\ord(e^{(0)}(t)) = r \geq 1$, we set $w_k = \min(r2^k,w)$ and execute $\ceil{\log_2(w/r)}$ steps of the iteration. We denote
\begin{eqnarray*}
J_H(x^{(k)}(t),t) &=& J^{(k)}_0 +  J^{(k)}_1 t +  J^{(k)}_2 t^2 + \ldots, \\
H(x^{(k)}(t),t) &=& H^{(k)}_0 +  H^{(k)}_1 t +  H^{(k)}_2 t^2 + \ldots, \\
\Delta x^{(k)}(t) &=& - J_H(x^{(k)}(t),t)^{-1} H(x^{(k)}(t),t) = d^{(k)}_0 + d^{(k)}_1 t + d^{(k)}_2 t^2 + \ldots.
\end{eqnarray*}
We have to compute the first $w_k$ terms of $\tilde{x}^{(k+1)}(t) = x^{(k)}(t) + \Delta x^{(k)}(t)$. 
The equation 
\begin{equation}
   -J_H(x^{(k)}(t),t)\Delta x^{(k)}(t)=H(x^{(k)}(t),t)
\end{equation}
gives 
\begin{eqnarray*}
J^{(k)}_0 d^{(k)}_0 &=& -H^{(k)}_0, \\
J^{(k)}_0 d^{(k)}_1 + J^{(k)}_1 d^{(k)}_0 &=& -H^{(k)}_1, \\
J^{(k)}_0 d^{(k)}_2 +J^{(k)}_1 d^{(k)}_1+ J^{(k)}_2 d^{(k)}_0 &=& -H^{(k)}_2, \\
& \vdots & \\
J^{(k)}_0 d^{(k)}_{w_k-1} +J^{(k)}_1 d^{(k)}_{w_k-2}+ \ldots + J^{(k)}_{w_k-1} d^{(k)}_0 &=& -H^{(k)}_{w_k-1}. \\
\end{eqnarray*}
It is an immediate corollary from Theorem \ref{thm:powersol} that if $\ord(e^{(0)}(t)) = r \geq 1$, then $d^{(k)}_i = 0, i = 0, \ldots, w_{k-1}-1$ and hence $H_i^{(k)} = 0, i = 0, \ldots, w_{k-1}-1$. It follows that we only have to solve 
\begin{align} \label{eq:triagsys}
J^{(k)}_0 d^{(k)}_{w_{k-1}} &= -H^{(k)}_{w_{k-1}}, \nonumber \\
& \vdots &  \nonumber \\
J^{(k)}_0 d^{(k)}_{w_k-1} +J^{(k)}_1 d^{(k)}_{w_k-2}+ \ldots + J^{(k)}_{w_k-w_{k-1} -1} d^{(k)}_{w_{k-1}} &= -H^{(k)}_{w_k-1}. 
\end{align}
and this can be done equation by equation, via backsubstitution. In practice, we will use these results as in Algorithm \ref{alg:computeseries}, where we assume that $r = 1, t^* \in \C$, $x^{(0)} \in \C^n \subset \C[[t]]^n$ such that $H(x^{(0)},t^*) = 0$.
\begin{algorithm}[h!]
\caption{Computes the powerseries solution of $H(x,t)=0$ corresponding to $x^{(0)}$ around $t = t^*$.}\label{alg:computeseries}
\begin{algorithmic}[1]
\STATE \textbf{procedure} ComputeSeries($H, t^*,w,x^{(0)}$)
\STATE $H \gets H(x,t+t^*)$
\STATE $k \gets 0$
\WHILE{$k < \ceil{\log_2(w)}$}
\STATE $w_{k} \gets \min(2^{k},w)$
\STATE Compute $x^{(k+1)}$ by solving \eqref{eq:triagsys}
\STATE $k \gets k+1$
\ENDWHILE 
\STATE \textbf{return} $\{x_1^{(k)}(t), \ldots, x_n^{(k)}(t) \}$
\STATE \textbf{end procedure}
\end{algorithmic}
\end{algorithm}

\section{A Robust Algorithm for Tracking Smooth Paths} \label{sec:algorithm}
In this section we show how the results of the previous sections lead to a smooth path tracking algorithm. More specifically, we propose a new adaptive stepsize predictor for homotopy path tracking. We will use $\Gamma(s) = s$ and assume that this is a smooth parameter path for simplicity, but the generalization to different parameter paths is straightforward. 
The aim of this section is to motivate the heuristics and to present and analyze the algorithm. In the next section we will show some convincing experiments.

We will use Pad\'e approximants for the prediction. The stepsize computation is based on two criteria. That is, we compute two candidate stepsizes $\{\Delta t_1, \Delta t_2 \}$ based on two different estimates of the largest `safe' stepsize. The eventual value of $\Delta t$ that is returned by the predictor (line \ref{step:predict} in Algorithm \ref{alg:pathtracking}) is $\min \{\Delta t_1, \Delta t_2, t_{\textup{EG}} - t^* \}$. For the first criterion we estimate the distance to the nearest point of a different path in $ X \times \{t^*\}$. This estimate is only accurate if we are actually in a difficult region. Comparing this to an estimate for the Pad\'e approximation error we compute $\Delta t_1$ such that the predicted point $\tilde{z}$ is much closer to the correct path than to the nearest different path. The value of $\Delta t_2$ is an estimate for the radius of the `trust region' of the Pad\'e approximant, which is influenced by nearby singularities in the parameter space (see Section \ref{sec:pade}). We discuss these two criteria in detail in the first subsection. In the second subsection we present the algorithm. In the last subsection we present a complexity analysis of our algorithm.
\subsection{Adaptive stepsize: two criteria}
The values of $\Delta t_1$ and $\Delta t_2$ are computed from an estimate of the distance to the nearest different path, the approximation error of the Pad\'e approximant for small stepsizes and an estimate for some global `trust radius' of the Pad\'e approximants. We discuss these estimates first and then turn to the computation of $\Delta t_1$ and $\Delta t_2$ from these estimates.
\subsubsection{Distance to the nearest path}
We will use $\norm{\cdot}$ to denote the euclidean 2-norm for vectors and the induced operator norm for matrices. Consider the homotopy $H: X \times \C \rightarrow \C^n$. Suppose that for some ${t^*} \in [0,1)$ we have $H(z^{(1)}_{t^*},{t^*}) = H(z^{(2)}_{t^*},{t^*}) = 0$, so $z^{(1)}_{t^*} \neq z^{(2)}_{t^*} \in Z_{t^*}$ lie on two different solution paths. We assume that $z^{(1)}_{t^*}$ is close to $z^{(2)}_{t^*}$. Denote $\Delta z = z^{(2)}_{t^*} - z^{(1)}_{t^*} \in \C^n$ and think of $\Delta z$ as a column vector. Our goal here is to estimate $\norm{\Delta z}$. Neglecting higher order terms, we get 
\begin{equation}\label{eq:approx}
H(z^{(2)}_{t^*},{t^*}) \approx H(z^{(1)}_{t^*},{t^*}) + J_H(z^{(1)}_{t^*},{t^*}) \Delta z + \frac{v}{2}, \quad v = \begin{bmatrix}
\pair{\Hess_1(z^{(1)}_{t^*},{t^*}) \Delta z, \Delta z} \\ \vdots \\ \pair{\Hess_n(z^{(1)}_{t^*},{t^*}) \Delta z, \Delta z}\end{bmatrix}
\end{equation} 
where 
$
  (\Hess_i(x,{t}))_{j,k}
  = \frac{\partial^2 h_i}{\partial x_j \partial x_k},
  {1 \leq j,k \leq n},
$
are the Hessian matrices of the individual equations and $\pair{\cdot,\cdot}$ is the usual inner product in $\C^n$. To simplify the notation, we denote $\Hess_i = \Hess_i(z^{(1)}_{t^*},{t^*})$ and $J_H = J_H(z^{(1)}_{t^*},{t^*})$. The Hessian matrices are Hermitian, so they have a unitary diagonalization $\Hess_i = V_i \Lambda_i V_i^H$ where $\cdot^H$ is the Hermitian transpose and the $V_i$ are unitary matrices with eigenvectors of $\Hess_i$ in their columns. We may write $\Delta z = V_i w_i$ for some coefficient vector $w_i$ such that $\norm{w_i} = \norm{\Delta z}$. We have 
$\pair{\Hess_i \Delta z, \Delta z} = \pair{\Lambda_i w_i, w_i}$.
Let $\sigma_{k,\ell} = \sigma_\ell(\Hess_k)$ be the $\ell$-th singular value of $\Hess_k$. The absolute values of the diagonal entries of $\Lambda_i$ are exactly the singular values, so that 
$|\pair{\Hess_i \Delta z, \Delta z}|
   \leq \sigma_{i,1} \norm{w_i}^2
   = \sigma_{i,1} \norm{\Delta z}^2$.
It follows easily that $ \norm{v} \leq 
  \sqrt{\sigma_{1,1}^2 + \ldots + \sigma_{n,1}^2} \norm{\Delta z}^2$.
Since $\norm{J_H \Delta z} \geq \sigma_n(J_H) \norm{\Delta z}$ and by \eqref{eq:approx} we have $\norm{J_H \Delta z} \approx \norm{v}/2$, it follows that 
\begin{equation} \label{eq:distance}
\norm{\Delta z} \gtrsim 2\sigma_n(J_H) (\sigma_{1,1}^2 + \ldots + \sigma_{n,1}^2)^{-\frac{1}{2}}.
\end{equation}
Intuitively, the `more regular' the Jacobian, the better and the more curvature, the worse. Motivated by \eqref{eq:distance}, we make the following definition. 
\begin{definition} \label{def:distestimate}
For $z_{t^*}^{(i)} \in Z_{t^*}, {t^*} \in [0,1)$, 
set $J_H = J_H(z_{t^*}^{(i)},{t^*})$ 
and $\sigma_{k,\ell} = \sigma_\ell(\Hess_k(z_{t^*}^{(i)},{t^*}))$ and define
$
   \eta_{i,{t^*}}
   = 2\sigma_n(J_H) (\sigma_{1,1}^2 + \ldots + \sigma_{n,1}^2)^{-\frac{1}{2}}.
$
\end{definition}
The numbers $\eta_{i,t^*}$ are estimates for the distance to the most nearby different path. To make sure the prediction error $\norm{x(t^* + \Delta t) - \tilde{x}(t^* + \Delta t)}$ (where $\tilde{x}(t)$ is the coordinatewise Pad\'e approximant) is highly unlikely to cause path jumping, we will solve $\norm{x(t^* + \Delta t) - \tilde{x}(t^* + \Delta t)} = \beta_1 \eta_{i,t^*}$ for a small fraction $0<\beta_1 \ll 1$ to compute an adaptive stepsize $\Delta t$. We now discuss how to estimate $\norm{x(t^* + \Delta t) - \tilde{x}(t^* + \Delta t)}$.
\subsubsection{Approximation error of the Pad\'e approximant}
Without loss of generality, we take the current value of $t$ to be zero and consider Pad\'e approximants around ${t^*} = 0$ as in Section \ref{sec:pade}. Suppose that we have computed a type $(L,M)$ Pad\'e approximant $[L/M]_{x_j} = p_j(t)/q_j(t)$ of a coordinate function $x_j(t)$ around 0. Given a small real stepsize $\Delta t$, we want to estimate the error 
\begin{equation} \label{eq:padeerror}
|e_j(\Delta t)| = \left | \frac{p_j(\Delta t)}{q_j(\Delta t)} - x_j(\Delta t) \right | = \left| \frac{a_0 + a_1 \Delta t + \ldots + a_L \Delta t^L}{b_0 + b_1 \Delta t + \ldots + b_M \Delta t^M} - x_j(\Delta t) \right |.
\end{equation}
From Definition \ref{def:pade} we know that $e_j(t) \in \m^k$ (where usually $k = L + M + 1$), so \eqref{eq:padeerror} can be written as $|e_{0,j} \Delta t^k + e_{1,j} \Delta t^{k+1} + \ldots|$ with $e_{0,j} \neq 0$. For small $\Delta t$, the first term is expected to dominate the sum and so $|e_j(\Delta t) | \approx |e_{0,j} \Delta t^k|$. This estimate is also used in \cite{gervais2004continuation} for the case $L = 2, M = 1$ and a similar strategy is common to estimate the error in a power series approximation. An alternative is to use an estimate for the `linearized' error 
\begin{equation} \label{eq:linpadeerror}
|q_j(\Delta t) e_j(\Delta t)| = |p_j(\Delta t) - x_j(\Delta t) q_j(\Delta t)|
\end{equation}
which is equal to 
\begin{equation*}
 |(b_0 + b_1 \Delta t + \ldots + b_M \Delta t^M) (e_{0,j} \Delta t^k + e_{1,j} \Delta t^{k+1} + \ldots) | \approx |b_0e_{0,j} \Delta t^k|.
 \end{equation*}
Since $q_j(t)$ is a unit in $\C[[t]]$, $b_0 \neq 0$ and we can scale $p_j$ and $q_j$ such that $b_0 = 1$ and the estimates of \eqref{eq:padeerror} and \eqref{eq:linpadeerror} coincide. Taking $b_0 = 1$, the constant $e_{0,j}$ is the coefficient of $t^k$ in $(a_0 + a_1 t + \ldots + a_L t^L) - (1 + b_1 t + \ldots + b_M t^M)(c_0 + c_1t + \ldots)$, which is easily seen to be 
\begin{equation} \label{eq:errcoeff}
e_{0,j} = a_k - (c_k + b_1 c_{k-1} + \ldots + b_M c_{k-M})
\end{equation}
where $a_k = 0$ if $k > L$ and $c_j = 0$ for $j < 0$. Doing this for all $j$ and assuming that $k$ is the same for all coordinates we get an estimate 
\begin{equation}
  \norm{x(\Delta t)
  - \left( \frac{p_1(\Delta t)}{q_1(\Delta t)}, \ldots, 
           \frac{p_n(\Delta t)}{q_n(\Delta t)} \right )}
  \approx \norm{e_0} |\Delta t|^k,
\end{equation}
with $e_0= (e_{0,1}, \ldots, e_{0,n})$.
\subsubsection{Trust region for the Pad\'e approximant} \label{subsubsec:trust}
As discussed in Section \ref{sec:pade} and illustrated in Section \ref{ex:padeex}, branch points in the parameter space that are close to the parameter path cause problems for the Pad\'e approximation. If none of the poles of $[L/M]_{x_j}$ are close to a current parameter value on the path, we may be able to take a reasonably large step forward without getting into difficulties. However, since we take $L$ and $M$ to be small, we cannot expect the approximants $[L/M]_{x_j}$ to have already converged in a disk with radius the distance to the nearest singularity. Nor can we expect that the poles of $[L/M]_{x_j}$ are very good approximations of the actual singularities. Taking the distance $D$ to the most nearby pole of $[L/M]_{x_j}$ as an estimate for the convergence radius is a very rough estimate in this case. However, we observe that $D$ does give an estimate of the order of magnitude of the region in which $[L/M]_{x_j}$ is a satisfactory approximation. The conclusion is that we do not use $D$ itself, but $\beta_2 D$ where $0< \beta_2 < 1 $ is a safety factor. 

\subsubsection{The candidate stepsizes $\Delta t_1$ and $\Delta t_2$} 
We now use the ingredients presented above to compute two candidate stepsizes $\Delta t_1$ and $\Delta t_2$. For $\Delta t_1$, we use the estimate $\eta_{i,t^*}$ for the distance to the nearest path and the estimate $\norm{e_0} |\Delta t|^k$ for the approximation error of the Pad\'e approximant. The heuristic is that we want the approximation error to be only a small fraction of the estimated distance to the nearest path, so that the predicted point $\tilde{z}$ is much closer to the path being tracked than to the nearest different path. That is, we solve 
$
   \norm{e_0} |\Delta t_1|^k = \beta_1 \eta_{i,{t^*}}
$
for $\Delta t_1$, where $\beta_1 > 0$ is a small factor. Since the attraction bassins of Newton correction can behave in unexpected ways, it is best to take $\beta_1$ to be fairly small, for instance $\beta_1 = 0.005$. This gives 
\begin{equation} \label{eq:curvbound}
  \Delta t_1 = \sqrt[k]{\beta_1 \eta_{i,{t^*}}\norm{e_0}^{-1}}.
\end{equation}
Both the estimates $\eta_{i,{t^*}}$ and $\norm{e_0} |\Delta t|^k$ are only accurate in case trouble is near (they are based on lowest order approximations).
The case when $\norm{e_0} \approx 0$ causes trouble in the formula
for $\Delta t_1$, but for too small values of $\norm{e_0}$, we may
set $\Delta t_1$ simply to one.
If the resulting $\Delta t_1$ is large, the only thing this tells us is that we are not on a difficult point on the path with high probability. The second candidate stepsize, $\Delta t_2$, will make sure we don't take a step that is too large in this situation. At the same time, $\Delta t_2$ will be small when singularities in the parameter space are near the current point on the path. Let $D$ be the distance to the nearest pole out of all the poles of the $[L/M]_{x_j}, j = 1, \ldots, n$. We set 
\begin{equation}
   \Delta t_2 = \beta_2 D
\end{equation}
where $0< \beta_2 < 1$ is a safety factor which should not change the order of magnitude, for instance $\beta_2 = 0.5$. 

\begin{example}
As mentioned above, the estimate $\eta_{i,t^*}$ for the distance to the nearest different path is only accurate when another path is actually near. If this is not the case, $\Delta t_1$ may be too large and we need $\Delta t_2$ to make sure the resulting stepsize is still safe. To see that it is not enough to take only $\Delta t_2$ into account, consider the homotopy 
\begin{equation*}
H(x,t) = (x - (t-(a+b\sqrt{-1}))^2)(x + (t-(a+b\sqrt{-1}))^2), \quad t \in [0,1],
\end{equation*}
with $a,b \in \R$, $0<a<1$ and $|b|$ small. The paths corresponding to the two solutions are smooth and can be analytically continued in the entire complex plane: there are no singular points in $x_1(t), x_2(t)$. However, for $t = a+b\sqrt{-1}$ the two solutions coincide. By the assumptions on $a$ and $b$, this value of $t$ lies close to the parameter path $[0,1]$. Intuitively, the singularity of the Jacobian $J_H = \partial H/\partial x$ is canceled by a zero of $\partial H/ \partial t$: along the solution paths we have
\begin{equation*}
\frac{dx}{dt} = \frac{-\frac{\partial H}{\partial t}}{\frac{\partial H}{\partial x}} = \frac{4(t-(a+b\sqrt{-1}))^3}{2x} = \frac{4(t-(a+b\sqrt{-1}))^3}{\pm 2 (t-(a+b\sqrt{-1}))^2 } = \pm 2 (t-(a+b\sqrt{-1})).
\end{equation*}
For $t = a$, the solutions are $x_1 = -b^2, x_2 = b^2$, so for small $b$, the paths are very close to each other. The type $(1,1)$ Pad\'e approximant will have no poles (or will only have very large ones due to numerical artefacts), so taking only this criterion into account would allow for taking large steps. However, the estimate \eqref{eq:distance} at $t = a$ gives $|\Delta z| \approx 4b^2/2$, which is exactly the distance to the nearest different path.
\end{example}

\subsection{Path tracking algorithm}
We are now ready to present the path tracking algorithm. Since our contribution is in the predictor step (line \ref{step:predict} in Algoritm \ref{alg:pathtracking}), we focus on this part. The predictor algorithm is Algorithm \ref{alg:predict} below. It is straightforward to embed this predictor algorithm in the template Algorithm \ref{alg:pathtracking}. 
\begin{algorithm}[h!]
\small
\caption{Predictor algorithm}\label{alg:predict}
\begin{algorithmic}[1]
\STATE \textbf{procedure} Predict($H, z_{t^*}^{(i)}, {t^*}, L, M, \beta_1, \beta_2, t_{\textup{EG}}$)
\STATE $\{x_1(t), \ldots, x_n(t) \} \gets \textup{series solution of $H$ at $t = t^*$ computed up to order $L + M + 2$}$ \label{step:series}\\
 such that $x(0) = z_{t^*}^{(i)}$
\STATE $D \gets \infty$
\STATE compute $\eta_{i,{t^*}}$ as in Definition \ref{def:distestimate} \label{step:eta}
\FOR{$j = 1, \ldots, n$} 
\STATE $p_j,q_j \gets \textup{Pad\'eApprox}(x_j(t),L,M)$ \label{step:pade}
\STATE compute $e_{0,j}$ using \eqref{eq:errcoeff}
\STATE $D \gets \min \left \{ D, \min \{ |z| ~|~ q_j(z) = 0   \} \right \}$ \label{step:poles}
\ENDFOR
\STATE $e_0 \gets (e_{0,1}, \ldots, e_{0,n})$
\STATE $\Delta t_1 \gets \sqrt[k]{\frac{\beta_1 \eta_{i,t^*}}{\norm{e_0}}}$
\STATE $\Delta t_2 \gets \beta_2 D$
\STATE $\Delta t \gets \min \left \{ \Delta t_1, \Delta t_2, t_{\textup{EG}}-t^* \right \}$
\STATE $\tilde{z} \gets (p_1(\Delta t)/q_1(\Delta t), \ldots, p_n(\Delta t)/q_n(\Delta t))$
\STATE \textbf{return} $\tilde{z}, \Delta t$
\STATE \textbf{end procedure}
\end{algorithmic}
\end{algorithm}

We briefly discuss some of the steps in Algorithm \ref{alg:predict}. In step \ref{step:series}, The algorithm of \cite{bliss2018method} is used. We included a description of the algorithm and a proof of convergence in Section \ref{sec:powerseries}. The point around which we compute the series is $t^*$, the current parameter value on the path. The parameter $w = L + M + 2$ is the number of coefficients needed to compute the Pad\'e approximant of type $(L,M)$ and the approximation error estimate. The starting value of the power series is the constant vector $x^{(0)} = z_{t^*}^{(i)}$, satisfying $H(z_{t^*}^{(i)},{t^*}) = 0$ such that convergence of the algorithm in \cite{bliss2018method} is guaranteed. In step \ref{step:pade}, the type $(L,M)$ Pad\'e approximant of the coordinate function $x_j(t)$ is computed using the algorithm of \cite{gonnet2013robust}. 
Algorithm \ref{alg:predict} has some more input parameters than the predictor in the template algorithm. We will usually take $M$ very small (and often 1), motivated by the conclusions of Section \ref{sec:pade}. The value of $L$ is chosen, for instance, such that $L + M + 2$ is a power of $2$ e.g.\ $L = 5, M =1$, because of the quadratic convergence property of the power series algorithm proved in Proposition \ref{thm:powersol}. Reasonable values for $\beta_1, \beta_2$ are $\beta_1 = 0.005, \beta_2 = 0.5$ as stated before. The parameter $t_{\textup{EG}}$ is the beginning of the endgame operating region as in Section \ref{sec:pathtracking}. 

\subsection{Complexity analysis}
To conclude this section, we analyze the cost of one predictor-corrector step of our adaptive stepsize algorithm as a function of the number of variables $n$. The total cost of the algorithm consists roughly out of two main parts: the cost of the numerical linear algebra and the evaluation/differentiation operations. 
\begin{lemma} \label{lemmalinearalgebra}
Consider a homotopy in $n$ variables.
Let $L+M+1$ be $O(n)$.
The cost of the linear algebra operations
of Algorithm \ref{alg:predict} is $O(n^4)$.
\end{lemma}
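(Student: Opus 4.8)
The plan is to go through Algorithm \ref{alg:predict} step by step and tally the dominant linear-algebra costs, then check that each piece is $O(n^4)$ when $L+M+1 = O(n)$. First I would isolate the power series computation in step \ref{step:series}. By Theorem \ref{thm:powersol} (and the discussion after it) the series of order $w = L+M+2 = O(n)$ is obtained by $\ceil{\log_2 w} = O(\log n)$ Newton iterations, and within iteration $k$ one solves the block-triangular system \eqref{eq:triagsys} by backsubstitution: the $\ell$-th equation requires forming the right-hand side from $\ell$ already-known $n$-vectors via matrix-vector products with the $J_i^{(k)}$ (cost $O(n^2)$ each, $O(\ell n^2)$ total for equation $\ell$) and one solve with $J_0^{(k)}$ (after an $O(n^3)$ LU factorization done once per iteration, each solve is $O(n^2)$). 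Summing $\ell$ from $0$ to $w-1 = O(n)$ gives $O(n \cdot n \cdot n^2) = O(n^4)$ per iteration for the back-substitution bookkeeping, plus $O(n^3)$ for the factorization; over $O(\log n)$ iterations this is $O(n^4 \log n)$ — so here I would be slightly careful and either absorb the $\log n$ (the statement says $O(n^4)$; I'd note the geometric decrease in useful coefficients or simply remark the log factor is negligible), or observe that the truncation orders $w_k = 2^k$ grow geometrically so the work is dominated by the last iteration and the total is genuinely $O(n^4)$.

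Next I would account for the remaining linear-algebra operations, which are cheaper. Computing $\eta_{i,t^*}$ in step \ref{step:eta} (Definition \ref{def:distestimate}) needs the smallest singular value of the $n \times n$ matrix $J_H$ (one SVD, $O(n^3)$) and the largest singular value of each of the $n$ Hessians $\Hess_k$, i.e.\ $n$ SVDs of $n \times n$ matrices, costing $O(n \cdot n^3) = O(n^4)$. This is the term that forces the $O(n^4)$ bound even apart from the series solve. The Pad\'e approximant computations in step \ref{step:pade}: by the algorithm of \cite{gonnet2013robust}, each $[L/M]_{x_j}$ is obtained from an SVD of an $O(L+M) \times O(L+M)$ matrix, i.e.\ $O((L+M)^3) = O(n^3)$ per coordinate, so $O(n^4)$ over the $n$ coordinates; extracting the poles in step \ref{step:poles} is root-finding on a degree-$M$ polynomial (negligible, or $O(M^3) = O(n^3)$ if done by companion matrix, times $n$ coordinates, again $O(n^4)$). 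Evaluating $e_{0,j}$ via \eqref{eq:errcoeff}, assembling $\Delta t_1, \Delta t_2$, and evaluating the rational predictor $\tilde z$ are all $O(n \cdot (L+M)) = O(n^2)$.

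Finally I would add the corrector contribution: one (or a bounded number of) Newton steps at $t^* + \Delta t$, each an $n \times n$ linear solve at $O(n^3)$. Summing all contributions, the dominant terms are $O(n^4)$ from the series back-substitution and from the $n$ Hessian SVDs (and the $n$ Pad\'e SVDs), giving the claimed $O(n^4)$. The main obstacle I anticipate is the logarithmic factor in the series computation: I would need to argue carefully that the geometric growth of the truncation orders $w_k$ makes the last iteration dominate, so that the sum over iterations is $O(n^4)$ rather than $O(n^4 \log n)$ — or simply state the bound as $O(n^4)$ up to the understood logarithmic factor, consistent with how the lemma is phrased. A secondary point worth stating explicitly is the assumption, already in force from \eqref{eq:newtoniteration}, that $J_H(x^{(k)}(0),0) \in \textup{GL}(n,\C)$, so the factorizations used in the back-substitution are well defined.
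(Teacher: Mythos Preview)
Your proposal is correct and follows the same approach as the paper: enumerate the four dominant linear-algebra subroutines (the block-triangular solve for the series, the $n$ Hessian SVDs, the $n$ Pad\'e constructions, and the $n$ denominator root computations) and bound each by $O(n^4)$. The paper handles the series step slightly differently---it cites the per-Newton-step cost $O(n^3)+O(\ell n^2)$ from \cite{bliss2018method} with $\ell=O(n)$, multiplies by $O(\log n)$ iterations to get $O(n^3\log n)$, and then simply observes this is $O(n^4)$---whereas you carry out the back-substitution count explicitly and use the geometric growth of the $w_k$ to kill the $\log n$; both arrive at the same bound. One small note: the corrector step you mention at the end is not part of Algorithm~\ref{alg:predict} (it is the predictor only), so you can drop that paragraph.
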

\begin{proof} 
The dominant linear algebra operations in Algorithm \ref{alg:predict} are the following:
\begin{center}
\begin{tabular}{ll}
line \ref{step:series}: & solving a lower triangular block Toeplitz linear system, \\
line \ref{step:eta}: & computing the SVD of the Hessian matrices $\Hess_i$, \\
line \ref{step:pade}: & computing the Pad\'e approximant from the series coefficients, \\
line \ref{step:poles}: & computing the roots of the denominators of the Pad\'e approximants.     
\end{tabular}
\end{center}
The lemma will follow from investigating the complexity of each of these computations.

Exploiting the block structure of the linearized representations of
the power series, the cost of the linear algebra operations 
in one Newton step on a series truncated to degree~$\ell$ 
requires $O(n^3) + O(\ell n^2)$ operations~\cite{bliss2018method}.
In our application, $\ell = L+M+1$, which is $O(n)$.
Counting on the quadratic convergence of Newton's method,
we need $O(\log(n))$ steps, so the linear algebra cost to
compute the power series is $O(\log(n) n^3)$,
which is $O(n^4)$.

The cost of one SVD decomposition of an $n$-by-$n$ matrix
is $O(n^3)$, see e.g. \cite[Section 5.4]{Dem97}.
The bound \eqref{eq:curvbound} requires $n$ SVD decompositions,
so we obtain $O(n^4)$.

The power series are input to $n$ Pad\'{e} approximants
of degrees $L$ and $M$, bounded by $O(n)$,
as solutions of linear systems of size $O(n)$.
The total cost of computing the coefficients of the Pad\'{e} approximants
is bounded by $O(n^4)$.

The pole distance requires the computation of the roots of the denominators of the Pad\'{e} approximants.
The denominators have degree $M$ and $M$ is $O(n)$.
Computing all eigenvalues of $n$ companion matrices is $O(n^4)$ using classical eigenvalue algorithms, and $O(n^3)$ using a specialized algorithm, e.g.\ \cite{aurentz2015fast}.

Thus the cost of all linear algebra operations is $O(n^4)$. \end{proof}

\begin{lemma} \label{lemmahessians}
The cost to differentiate and evaluate the $n$ Hessians ${\cal H}_k$
is $2n$ times the cost of computing the Jacobian matrix~$J_H$.
\end{lemma}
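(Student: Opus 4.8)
The plan is to compare, term by term, the arithmetic cost of forming the $n$ Hessian matrices $\Hess_k = \Hess_k(z_{t^*}^{(i)}, t^*)$ against the cost of forming the single Jacobian matrix $J_H = J_H(z_{t^*}^{(i)}, t^*)$. The key observation is that the rows of $J_H$ are exactly the gradients $\nabla_x h_k$ for $k = 1, \ldots, n$, and each Hessian $\Hess_k$ is obtained from the single gradient $\nabla_x h_k$ by differentiating it once more with respect to each of the $n$ variables $x_1, \ldots, x_n$. So I would first recall the standard reverse-mode (adjoint) differentiation fact: given a straight-line program that evaluates a scalar function $h_k$ in $C$ operations, one obtains its full gradient $\nabla_x h_k$ in $O(C)$ operations (a small constant times $C$), which is the basis of the $O(n) \cdot (\text{cost of one } h_k)$ accounting already used implicitly when the paper speaks of ``the cost of computing the Jacobian matrix $J_H$.''

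Next I would argue that the gradient map $x \mapsto \nabla_x h_k(x)$ is itself computed by a straight-line program whose size is within a constant factor of the size of the program for $h_k$ (again by reverse mode). Applying reverse-mode differentiation a second time — now to each component of $\nabla_x h_k$, or equivalently forming the Jacobian of the vector-valued map $\nabla_x h_k$ — produces the Hessian $\Hess_k$ at a cost that is a constant times the cost of evaluating $\nabla_x h_k$, hence a constant times the cost of evaluating $h_k$. Summing over $k = 1, \ldots, n$, the total cost of all $n$ Hessians is a constant times $\sum_{k=1}^n (\text{cost of } h_k)$, which is precisely (up to the same constant) the cost of forming all $n$ rows of $J_H$. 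Tracking the constants in the reverse-mode estimates carefully yields the stated factor of $2n$: each Hessian costs about twice the cost of the corresponding gradient row, and there are $n$ of them, while $J_H$ is the assembly of those $n$ gradient rows.

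The main obstacle — and the place where the argument must be stated with some care rather than waved at — is pinning down the precise constant ``$2$'' in ``$2n$ times.'' The loose version (Hessians cost $O(n)$ times the Jacobian) follows immediately from generic complexity bounds on iterated algorithmic differentiation, but the sharp constant depends on the exact differentiation scheme assumed (forward-over-reverse versus reverse-over-reverse, and how shared subexpressions in the $h_k$ are exploited). I would therefore make explicit at the start of the proof which differentiation model is in force — most naturally, that each $\Hess_k$ is computed by one pass of algorithmic differentiation applied to the already-available routine for the gradient row $\nabla_x h_k$, i.e.\ the $k$-th row of $J_H$ — so that ``cost of $\Hess_k$'' $= 2 \cdot$ ``cost of $k$-th row of $J_H$'' holds by construction, and the factor $2n$ then follows by summing over $k$. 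With that convention fixed, the remainder of the proof is bookkeeping and requires no further ideas.
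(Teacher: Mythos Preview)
Your overall strategy --- invoke algorithmic differentiation, cost out one Hessian $\Hess_k$ against the corresponding gradient row $\nabla_x h_k$, then sum over $k$ --- is exactly what the paper does. But your accounting of the constant contains a genuine error that makes the argument come out wrong by a factor of~$n$.

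The slip is the sentence ``forming the Jacobian of the vector-valued map $\nabla_x h_k$ produces the Hessian $\Hess_k$ at a cost that is a constant times the cost of evaluating $\nabla_x h_k$.'' Reverse mode gives the full gradient of a \emph{scalar} output at a constant multiple of the evaluation cost (Baur--Strassen). It does \emph{not} give the full Jacobian of a map with $n$ outputs at constant overhead: one reverse sweep yields only a single row (a vector--Jacobian product), so recovering the whole $n\times n$ Hessian requires $n$ sweeps, hence an extra factor of~$n$. With your stated bound ``$\text{cost}(\Hess_k)=2\cdot\text{cost}(\nabla_x h_k)$'' and then summing over $k$, you would get total Hessian cost $=2\cdot\text{cost}(J_H)$, not $2n\cdot\text{cost}(J_H)$; the phrase ``and there are $n$ of them'' does not supply the missing $n$, because $J_H$ already aggregates those same $n$ rows.

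The paper does not attempt to re-derive the constant. It simply quotes the known result (Christianson; see also Griewank--Walther): if $W_f$ is the cost of evaluating a scalar $f$ together with its gradient, then the Hessian of $f$ can be evaluated at cost $2nW_f$. Applying this to each $h_k$ and summing gives $\sum_k 2nW_{h_k}=2n\sum_k W_{h_k}=2n\cdot\text{cost}(J_H)$, which is the lemma. If you want to keep your self-contained derivation, the fix is to replace the faulty sentence by the correct statement that one Hessian costs $2n$ times the cost of the corresponding function-plus-gradient, with a brief justification (e.g.\ $n$ forward-over-reverse Hessian--vector products, each at cost $\leq 2W_{h_k}$).
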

\begin{proof} 
We apply a result from algorithmic differentiation,
see \cite{GW08} and in particular~\cite{Chr92}.  
Let $f$ be a function in $n$ variables.
If $W_f$ is the cost to evaluate $f$ and its gradient,
then the cost of the evaluation of the Hessian matrix of $f$
is $2n W_f$. The lemma follows by application of this result to all polynomials
$h_i$ in the homotopy~$H$.  
\end{proof}

\begin{lemma} \label{lemmanewton}
Let $W_N$ be the cost of evaluation and differentiation
for applying Newton's method on $H(x,t)$ at $(z_{t^*},t^*) \in X \times \C$.
The cost of evaluation and differentiation for applying Newton's method on $H(x,t)$ at the series $x(t)$ truncated at degree $O(n)$ is $O(n \log(n)) W_N$.
\end{lemma}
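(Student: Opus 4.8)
The plan is to combine the algorithmic-differentiation cost model with the structure of the Newton iteration on power series, as already set up in Section \ref{sec:powerseries}. First I would recall what "evaluation and differentiation for Newton's method on $H(x,t)$ at a point" means: it is the cost $W_N$ of evaluating $H$ and its Jacobian $J_H$ at a single point of $X \times \C$. When we instead apply the Newton iteration \eqref{eq:newtoniteration} on a truncated power series $x(t) \in \C[[t]]/\m^{w}$ with $w = O(n)$, the operations $H(x^{(k)}(t),t)$ and $J_H(x^{(k)}(t),t)$ are the \emph{same} straight-line program, but now carried out in the arithmetic of truncated power series rather than in $\C$. So the key point is: each scalar multiplication in the evaluation of $H$ and $J_H$ is replaced by one multiplication of two polynomials of degree $< w$, and each scalar addition by a polynomial addition.

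Next I would quantify the overhead of a single truncated-power-series multiplication. A naive convolution of two degree-$(w-1)$ polynomials costs $O(w^2)$ scalar operations, while addition costs $O(w)$; using fast (FFT-based) polynomial multiplication it is $O(w \log w)$. Since the statement claims an $O(n \log n)$ factor and $w = O(n)$, the intended accounting is the fast-multiplication one: replacing each arithmetic operation in the straight-line program computing $H$ and $J_H$ by its truncated-series analogue multiplies the operation count by $O(w \log w) = O(n \log n)$. Therefore the cost of one evaluation-and-differentiation of $H$ at the series $x(t)$ is $O(n \log n)\, W_N$. (One should note that the quadratic-convergence argument of Theorem \ref{thm:powersol} means only $O(\log n)$ iterations are needed to reach truncation order $w = O(n)$, but the lemma as stated concerns the cost of a single evaluation/differentiation at the series, so that factor is not folded in here; it is used in Lemma \ref{lemmalinearalgebra} and in the surrounding discussion.)

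Finally I would assemble these pieces: the straight-line program for $(H, J_H)$ over $\C$ has some length $c$ with $W_N = \Theta(c)$ (up to the cost model for scalar arithmetic); running the identical program over $\C[t]/\m^{w}$ replaces each of the $c$ steps by a polynomial operation of cost $O(w \log w)$, giving total cost $O(c\, w \log w) = O(n \log n)\, W_N$ since $w = O(n)$. The main obstacle — really a modelling choice rather than a mathematical difficulty — is pinning down which polynomial-multiplication complexity is assumed: with schoolbook multiplication one only gets an $O(n^2)$ overhead, so the $O(n \log n)$ bound in the statement presupposes that fast multiplication (or at least an $O(w \log w)$ routine) is used for truncated power series, and the proof should say so explicitly. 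A secondary point to handle carefully is that $J_H$ is obtained by (algorithmic) differentiation of $H$, so by the cheap-gradient principle invoked in Lemma \ref{lemmahessians} its evaluation is within a constant factor of that of $H$; this keeps $W_N$ proportional to the length of a single straight-line program and makes the per-operation substitution argument go through uniformly for both $H$ and $J_H$.
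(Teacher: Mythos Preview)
Your proposal is correct and follows essentially the same approach as the paper: replace each scalar arithmetic operation in the evaluation/differentiation of $H$ by the corresponding operation on truncated power series, so the overhead per step is the cost $M(n)$ of multiplying two series truncated at degree $O(n)$, and then invoke fast multiplication to get $M(n)=O(n\log n)$ (the paper cites Brent--Kung for this). Your write-up is more explicit than the paper's two-sentence proof about the straight-line-program model and the role of algorithmic differentiation for $J_H$, but the argument is the same.
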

\begin{proof} 
If we evaluate a polynomial in a power series,
then we have to perform as many multiplications of power series
as there are multiplications in the evaluation of the polynomial 
at constant numbers.
The cost overhead is therefore the cost to multiply two power series,
denoted by $M(n)$ for power series truncated to degree~$n$. 
According to~\cite{BK78}, $M(n)$ is $O(n \log(n))$.  \end{proof}

The lemmas lead to the following result.

\begin{theorem} \label{theocostoverhead}
The cost overhead of the a priori adaptive step control algorithm (Algorithm \ref{alg:predict})
relative to the a posteriori adaptive step control algorithm for
a homotopy in $n$ variables is at most $O(n \log(n))$.
\end{theorem}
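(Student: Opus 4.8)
The plan is to compare the dominant costs of the a priori scheme (Algorithm \ref{alg:predict}) with the standard a posteriori predictor-corrector scheme, step by step, and show that every overhead factor is bounded by $O(n\log n)$. I would separate the analysis into the two parts used throughout this section: the numerical linear algebra and the evaluation/differentiation work. For the linear algebra, Lemma \ref{lemmalinearalgebra} already gives a bound of $O(n^4)$ for Algorithm \ref{alg:predict} when $L+M+1 = O(n)$. The a posteriori scheme requires a few Newton corrections on the system $H(x,t^*+\Delta t) = 0$ at constant parameter value, each costing $O(n^3)$ for the LU factorization and solve, hence $O(n^3)$ in total. So the linear algebra overhead is $O(n^4)/O(n^3) = O(n)$, which is dominated by $O(n\log n)$.

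Next I would treat the evaluation/differentiation cost. The a posteriori scheme evaluates $H$ and its Jacobian $J_H$ at constant points; call the cost of one such evaluation-and-differentiation $W_N$, as in Lemma \ref{lemmanewton}. The a priori scheme does three kinds of evaluation/differentiation work: (i) the Newton iteration of Section \ref{sec:powerseries} to build the power series, which by Lemma \ref{lemmanewton} costs $O(n\log n)\,W_N$ since the series is truncated at degree $O(n)$ and power series multiplication is $M(n) = O(n\log n)$; (ii) the computation of the $n$ Hessians $\Hess_k$ at the constant point $z_{t^*}^{(i)}$, which by Lemma \ref{lemmahessians} costs $2n$ times the cost of computing $J_H$, hence $O(n)\,W_N$; and (iii) evaluation of the Pad\'e approximants at the single real value $\Delta t$, which is lower order. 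Taking the maximum of these gives an evaluation/differentiation overhead of $O(n\log n)\,W_N$ versus $O(1)\,W_N$ for the a posteriori scheme, i.e.\ a factor $O(n\log n)$.

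Combining the two parts, the total overhead of Algorithm \ref{alg:predict} relative to the a posteriori scheme is $\max\{O(n), O(n\log n)\} = O(n\log n)$, which is the claimed bound. The only subtlety — and the step I expect to require the most care — is making sure the comparison is apples-to-apples: one must fix what counts as a single a posteriori predictor-corrector step (an Euler or low-order predictor plus a constant number of Newton corrections, each at constant $t$) so that its cost is genuinely $O(n^3)$ in linear algebra and $O(1)\,W_N$ in evaluation, and then verify that no hidden dependence on the homotopy (number of terms in the $h_i$, sparsity) changes the ratio. Once the cost model for the baseline is pinned down, the bound follows directly by assembling Lemmas \ref{lemmalinearalgebra}, \ref{lemmahessians} and \ref{lemmanewton}.
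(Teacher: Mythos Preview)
Your proposal is correct and follows essentially the same approach as the paper: you fix the baseline cost of an a posteriori predictor-corrector step as $O(n^3)$ linear algebra plus $O(1)\,W_N$ evaluation, then invoke Lemmas~\ref{lemmalinearalgebra}, \ref{lemmahessians}, and \ref{lemmanewton} to obtain overhead factors $O(n)$, $O(n)$, and $O(n\log n)$ respectively, with the last dominating. The paper's proof is more terse but structurally identical, including the same three lemma invocations and the same bottleneck identification.
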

\begin{proof}   Consider a predictor-corrector step in an a posteriori
step control algorithm.  The predictor is typically a fourth order
extrapolator and runs in $O(n)$.  The corrector applies a couple
of Newton steps, which requires $O(n^3)$ linear algebra operations
and with evaluation and differentiation cost $W_N$.
According to Lemma~\ref{lemmalinearalgebra}, the cost overhead
of the linear algebra operations is $O(n)$.
By Lemma~\ref{lemmahessians},  $O(n)$ is also
the cost overhead for the computation of the Hessians.
The $O(n \log(n))$ is provided by Lemma~\ref{lemmanewton}.
\end{proof}

\noindent We comment on the ``{\em at most $O(n \log(n))$}'' 
in Theorem~\ref{theocostoverhead}.

\begin{enumerate}
\item \emph{The cost of evaluation/differentiation relative to linear algebra.}

For very sparse polynomial systems, 
the cost of evaluation and differentiation could be independent
of the degrees and as low as for example $O(n^2)$, or even $O(n)$.
In that case, the evaluation and differentiation cost to compute
the power series would be $O(n^3 \log(n))$, or even as low as
$O(n^2 \log(n))$.  In both cases, the cost of the linear algebra
operations would dominate and the cost overhead drops to $O(n)$.

\item \emph{The value of $\ell = L+M+1$ versus $n$.}

Our analysis was based on the assumption that $\ell$ is $O(n)$.
For many polynomial systems arising in applications,
the number of variables $n \lesssim 10$.
A typical value for~$\ell$ is 7,
as our default values for $L$ and $M$ are~5 and~1 respectively,
so our assumption is valid.

For cases when $n \gg \ell$, the cost overhead of working with
power series then becomes $O(\ell \log(\ell))$, and $\ell$ may
even remain fixed to eight.
In cases when $n \gg \ell$, the cost overhead drops again to $O(n)$
as the cost of linear algebra operations dominates.

\end{enumerate}

\noindent The focus of our cost analysis was on one step of applying
our a priori adaptive step control algorithm and not on the
total cost of tracking one entire path. This cost depends on the number of steps required to track a path. We observe in experiments (see Section \ref{sec:numexp}) that using our algorithm some paths can be tracked successfully by taking only very few steps, even for high degree problems.

\section{Numerical experiments} \label{sec:numexp}
In this section we show some numerical experiments to illustrate the effectiveness of the techniques proposed in this article. The proposed method is implemented in PHCpack (v2.4.72), available at \url{https://github.com/janverschelde/PHCpack}, and in \texttt{Pad\'e.jl}, an implementation of our algorithm in Julia. In the experiments, our implementations are compared with the state of the art. We will use the following short notations for the different solvers in our experiments:
\begin{center}
\begin{tabular}{ll}
\texttt{brt\_DP} & Bertini v1.6 using double precision arithmetic (MPTYPE = 0) \cite{bates2013numerically}, \\
\texttt{brt\_AP} & Bertini v1.6 using adaptive precision (MPTYPE = 2) \cite{BHSW08},         \\
\texttt{HC.jl}   & HomotopyContinuation.jl v1.1 \cite{breiding2018homotopycontinuation},                               \\
\texttt{phc -p} & The \texttt{phc -p} command of PHCpack v2.4.72 \cite{verschelde1999algorithm},               \\
\texttt{phc -u}  & Our algorithm, used in PHCpack v2.4.72 via \texttt{phc -u}, \\
\texttt{Pad\'e.jl} & Our algorithm, implemented in Julia.         
\end{tabular}
\end{center}

\noindent We use default double precision settings for all these solvers, except \texttt{brt\_AP}, for which we use default adaptive precision settings. The experiments in all but the last subsection are performed on an 8 GB RAM machine with an intel Core 17-6820HQ CPU working at 2.70 GHz. We restrict all solvers to the use of only one core for all the experiments, unless stated otherwise. We will use $\Gamma: [0,1] \mapsto \C : s \mapsto s$, which will be a smooth parameter path as defined in Section \ref{sec:pathtracking} by the constructions in the experiments. Therefore, the parameter $s$ will not occur in this section and paths are of the form $\{(x(t),t), t \in [0,1) \} \subset X \times [0,1)$. In all experiments, we use $\beta_1 = 0.005, \beta_2 = 0.5$. To measure the quality of a numerical solution of a system of polynomial equations, we compute its residual as a measure for the relative backward error. We use the definition of \cite[Section 7]{telen2018stabilized} to compute the residual.

\subsection{A family of hyperbolas} \label{subsec:hyperbolas}
Consider again the homotopy \eqref{famofhyperb} from Example \ref{ex:sec2}, which represents a family of hyperbolas parametrized by the real parameter $p$. Recall that the ramification locus is $S = \{1/2 + p \sqrt{-1} \}$. We will consider $p \neq 0$ here, such that $[0,1]$ is a smooth parameter path. The smaller $|p|$, the closer the branch points move to the line segment $[0,1]$. 
Figure \ref{fig:famofhyperbolas} shows that 
as the value of $p>0$ decreases, the two solution paths approach each other for parameter values $t^* \approx 0.5$ which causes danger for path jumping.
\begin{figure}
\centering
\includegraphics[scale=1.0]{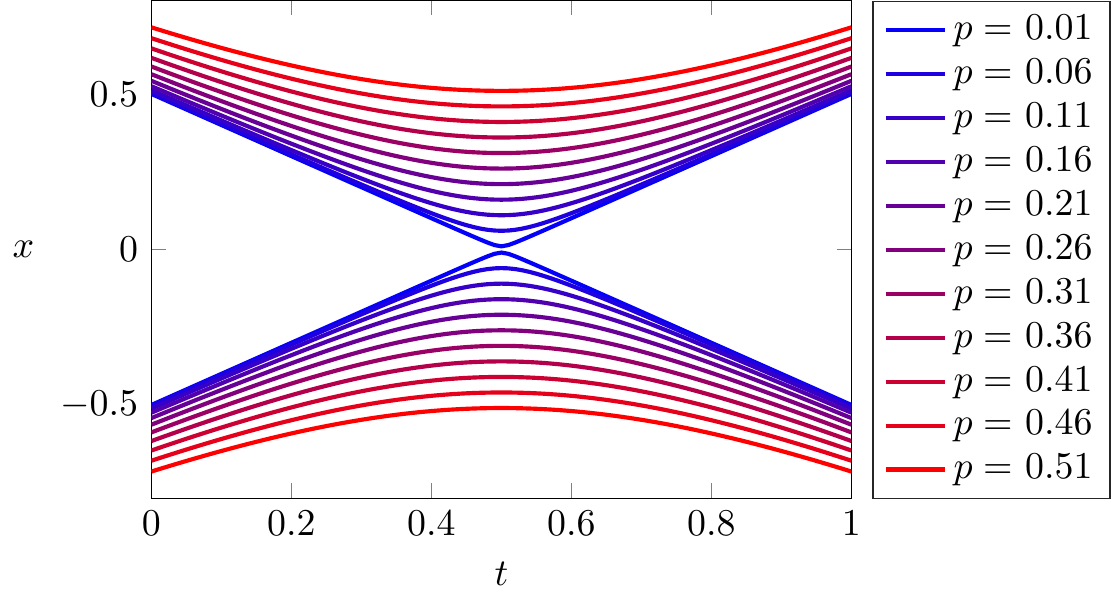}
\caption{Family of hyperbolas from Subsection \ref{subsec:hyperbolas}.}
\label{fig:famofhyperbolas}
\end{figure}
This is confirmed by our experiments. Table \ref{tab:hyperbolas} shows the results. We used $L = 5, M = 1$ in \texttt{phc -u}. The Julia implementation \texttt{HC.jl} checks whether the starting solutions are (coincidentally) solutions of the target system. For this reason, with this solver, we track for $t \in [0.1,1]$.
\begin{table}[]
\centering
\footnotesize
\begin{tabular}{l|lllllll}
     \diagbox[trim=rl,width=5.0em]{Solver}{$k$}   & 1 & 2 & 3 & 4 & 5 & 6 & 7 \\ \hline
\texttt{brt\_DP} & \cmark  & \cmark  & \cmark  & \xmark  & \xmark  & \xmark  & \xmark  \\
\texttt{brt\_AP} & \cmark  & \cmark  & \cmark  & \cmark  & \xmark  & \xmark  & \xmark  \\
\texttt{HC.jl}   & \cmark  & \xmark  & \xmark  & \xmark  & \xmark  & \xmark  & \xmark  \\
\texttt{phc -p}  & \cmark  & \xmark  & \xmark  & \xmark  & \xmark  & \xmark  & \xmark  \\
\texttt{phc -u}  & \cmark  & \cmark  & \cmark  & \cmark  & \cmark  & \cmark  & \cmark  
\end{tabular}
\caption{Results of the experiment of Subsection \ref{subsec:hyperbolas} for $p = 10^{-k}, k = 1, \ldots, 7$. A `\xmark' indicates that path jumping happened. }
\label{tab:hyperbolas}
\end{table}

\subsection{Wilkinson polynomials}\label{subsec:wilkinson}
As a second experiment, consider the Wilkinson polynomial $W_d(x) = \prod_{i = 1}^d (x-i)$ for $d \in \N_{>0}$. When $d > 10$, it is notoriously hard to compute the roots of these polynomials numerically when they are presented in the standard monomial basis. For Bertini and HomotopyContinuation.jl, we use the blackbox solvers to find the roots of the $W_d(x)$. In PHCpack, we use 
\begin{equation*}
H(x,t) = (x^d - 1)(1-t) + \gamma W_d(x) t  
\end{equation*}
with $\gamma$ a random complex number\footnote{The other solvers use $\Gamma(s) = 1-s$ by default. This is not important here.}. 
The case $d = 12$ is illustrated in Figure \ref{fig:w12}.
\begin{figure}[h!]
\centering
\includegraphics[scale=0.8]{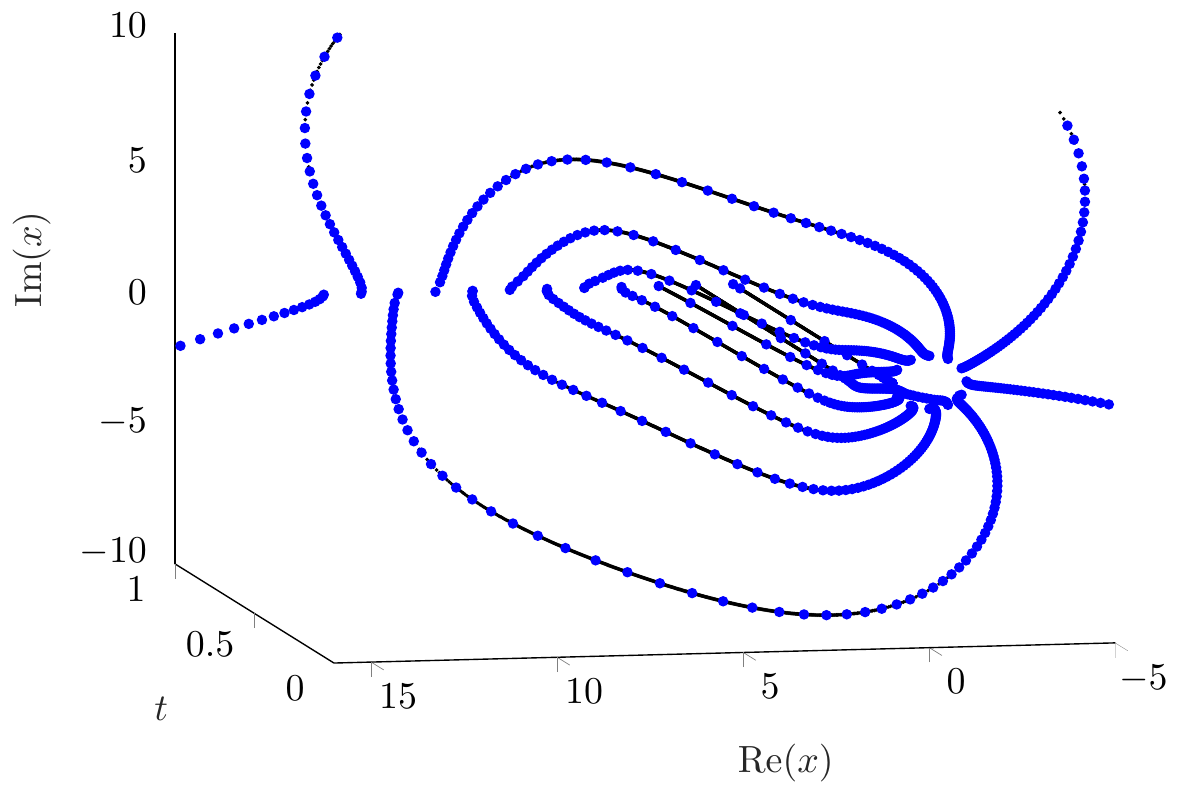}
\caption{Solution paths for a random linear homotopy as in Subsection \ref{subsec:wilkinson} connecting the 12th roots of unity to the roots of $W_{12}(x)$. The blue dots are the numerical approximations of points on the paths computed by our algorithm using $L = M = 1$.}
\label{fig:w12}
\end{figure}
We use default settings for other solvers and $L = 5, M = 1$ in our algorithm to solve $W_d(x)$ for $d = 10, \ldots, 19$. The results are reported in Table \ref{tab:wilkinson}.
\setlength{\tabcolsep}{7pt}
\begin{table}[]
\scriptsize
\centering
\begin{tabular}{c|cc|cc|cc|cc|ccc}
\multirow{2}{*}{$d$} & \multicolumn{2}{c}{\texttt{phc -p}} & \multicolumn{2}{c}{\texttt{HC.jl}} & \multicolumn{2}{c}{\texttt{brt\_DP}} & \multicolumn{2}{c}{\texttt{brt\_AP}} & \multicolumn{3}{c}{\texttt{phc -u}} \\
                   &      $e$      & T             &     $e$      & T             &           $e$ & T              &     $e$       & T              &     $e$    & T      & \#      \\ \hline
10                 & 5          & 8.0e-3        & 0         & 2.5e-3        & 0          & 4.5e-2         & 0          & 2.5e-2         & 0       &  4.0e-2      &    23-42     \\
11                 & 7          & 2.9e-2        & 0         & 3.6e-3        & 0          & 1.9e-1         & 0          & 1.4e+0         & 0       &  5.2e-2      &    12-45     \\
12                 & 9          & 3.4e-2        & 0         & 6.7e-3        & 0          & 1.5e-1         & 0          & 2.0e+0         & 0       &  6.9e-2      &    12-50     \\
13                 & 10         & 3.5e-2        & 0         & 4.1e-3        & 0          & 3.2e-1         & 0          & 2.8e+0         & 0       &  1.1e-1      &    35-54     \\
14                 & 11         & 2.4e-2        & 1         & 6.2e-3        & 0          & 4.8e-1         & 0          & 3.8e+0         & 0       &  1.0e-1      &    12-69     \\
15                 & 13         & 1.7e-2        & 1         & 9.0e-3        & 15         & 1.5e-2         & 15         & 1.6e-2         & 0       &  1.2e-1      &    43-63     \\
16                 & 15         & 2.1e-2        & 6         & 6.7e-3        & 16         & 1.6e-2         & 16         & 1.4e-2         & 0       &  1.7e-1      &    12-74     \\
17                 & 16         & 1.6e-2        & 10        & 3.2e-3        & 17         & 1.8e-2         & 17         & 1.3e-2         & 0       &  1.9e-1      &    11-73     \\
18                 & 18         & 6.0e-3        & 11        & 1.4e-2        & 18         & 1.8e-2         & 18         & 1.4e-2         & 0       &  2.4e-1      &    57-81     \\
19                 & 18         & 1.8e-2        & 13        & 7.0e-3        & 19         & 1.8e-2         & 19         & 1.4e-2         & 0       &  2.6e-1      &    12-83    
\end{tabular}
\caption{Results for the experiment of Subsection \ref{subsec:wilkinson}.}
\label{tab:wilkinson}
\end{table}
The number $e$ is the number of failures, i.e.\ $d$ minus the number of distinct solutions (up to a certain tolerance) returned by each solver with residual $< 10^{-9}$, and T is the computation time in seconds. The column indexed by `\#' gives the minimum and maximum number of steps on a path for our solver. We conclude this section with a brief comparison with certified tracking algorithms. For $W_4(x)$, the algorithm\footnote{We use a Macaulay2 implementation, available at \url{http://people.math.gatech.edu/~aleykin3/RobustCHT/} to perform these experiments.} proposed in \cite{beltran2013robust} takes 6261.6 steps for the path starting at $z_0 = -1$ (this is averaged out over 5 experiments with random, rational $\gamma$). For $W_{15}(x)$ the certified tracking algorithm of \cite{xu2018approach} (which is specialized for the univariate case) takes on average 790 steps per path. 

\subsection{Generic polynomial systems} \label{subsec:generic}
In this subsection, we consider random, square polynomial systems and solve them using the different homotopy continuation packages and the algorithm proposed in this paper. We now specify what `random' means. Fix $n$ and $d \in \N \setminus \{0\}$. A \textit{generic polynomial system} of dimension $n$ and degree $d$ is given by 
$F : \C^n \rightarrow \C^n : x \mapsto (f_1(x), \ldots, f_n(x))$
where
\begin{equation*}f_i(x) = \sum_{|q| \leq d} c_{i,q} x^q \in R = \C[x_1, \ldots, x_n],
\end{equation*}
with $q = (q_1, \ldots, q_n) \in \N^n$, $|q| = q_1 + \cdots + q_n$ and $c_{i,q}$ are complex numbers whose real and imaginary parts are drawn from a standard normal distribution. The \textit{solutions} of $F$ are the points in the fiber $F^{-1}(0) \subset \C^n$, and by B\'ezout's theorem, there are $d^n$ such points. In order to find these solutions, we track the paths of the homotopy 
\begin{equation*}
H(x,t) = G(x)(1-t) + \gamma F(x) t, \quad t \in [0,1]
\end{equation*} 
where $ \gamma$ is a random complex constant and
$
G : \C^n \rightarrow \C^n : x \mapsto (x_1^d - 1, \ldots, x_n^d-1)
$
represents the \textit{start system} with $d^n$ known, regular solutions. Results are given in Table \ref{tab:generic}.
\setlength{\tabcolsep}{4pt}
\begin{table}[]
\centering
\scriptsize
\begin{tabular}{c|c|cc|cc|cc|cc|cccc}
\multirow{2}{*}{$n$} & \multirow{2}{*}{$d$} & \multicolumn{2}{c}{\texttt{phc -p}} & \multicolumn{2}{c}{\texttt{HC.jl}} & \multicolumn{2}{c}{\texttt{brt\_DP}} & \multicolumn{2}{c}{\texttt{brt\_AP}} & \multicolumn{4}{c}{\texttt{phc -u}} \\
                     &                      & $e$        & T             & $e$        & T            & $e$           & T           & $e$           & T           & $e$ & T      & \#   & $h$    \\ \hline
\multirow{5}{*}{1}   & 20                   & 0          & 5.0e+0        & 0          & 1.7e-3       &  0            & 3.1e-2      &              0 &  7.5e-2     & 0   & 4.2e-2 & 6-16 & 0.09 \\
                     & 50                   & 0          & 2.6e-2        & 0          & 6.3e-3       &  0            & 1.3e-1      &               0 &  2.3e+0     & 0   & 2.4e-1 & 5-27 & 0.07 \\
                     & 100                  & 2          & 9.1e-2        & 0          & 1.1e-2       &  49           & 5.3e-1      &               0 &  1.2e+1     & 0   & 8.9e-1 & 4-27 & 0.13 \\
                     & 200                  & 2          & 2.7e-1        & 0          & 3.2e-2       &  97           & 1.6e+0      &               1 &  4.5e+1     & 0   & 2.9e+0 & 5-25 & 0.13 \\
                     & 300                  & 5          & 6.6e-1        & $\times$   & $\times$     &  221          & 2.8e+0      &               27&  3.3e+2     & 0   & 8.3e+0 & 4-49 & 0.13 \\ \hline
\multirow{5}{*}{2}   & 10                   & 0          & 1.8e-1        & 0          & 1.5e-2       &  0            & 3.8e-1      &               0 &  2.4e+0     & 0   & 2.1e+0 & 8-37 & 0.10 \\
                     & 20                   & 2          & 2.2e+0        & 0          & 8.9e-2       &  0            & 1.4e+1      &               0 &  1.2e+2     & 0   & 2.6e+1 & 8-55 & 0.13 \\
                     & 30                   & 8          & 1.2e+1        & 0          & 3.3e-1       &  0            & 9.9e+1      &               0 &  2.0e+3     & 0   & 1.3e+2 & 8-68 & 0.13 \\
                     & 40                   & 22         & 3.7e+2        & 0          & 9.1e-1       &  68           & 3.5e+2      &               0 &  7.8e+3     & 0   & 4.2e+2 & 6-57 & 0.15 \\
                     & 50                   & 39         & 8.7e+2        & 0          & 2.3e+0       &  12           & 1.4e+3      &               0 &  3.4e+4     & 0   & 1.0e+3 & 7-57 & 0.14 \\ \hline
\multirow{3}{*}{3}   & 5                    & 0          & 3.5e-1        & 0          & 3.0e-2       &  0            & 7.0e-1      &                0 &  7.0e-1     & 0   & 4.8e+0 & 9-55 & 0.09 \\
                     & 9                    & 1          & 8.5e+0        & 0          & 2.3e-1       &  0            & 2.1e+1      &               0 &  4.8e+1     & 0   & 9.8e+1 & 8-56 & 0.10 \\
                     & 13                   & 4          & 6.8e+1        & 0          & 1.5e+0       &  0            & 2.3e+2      &               0 &  1.0e+3     & 0   & 8.3e+2 & 8-85 & 0.11
\end{tabular}
\caption{Results for the experiment of Subsection \ref{subsec:generic}.}
\label{tab:generic}
\end{table}
In the table, $n$ and $d$ are as in the discussion above and $e$ is the number of failures (i.e.\ $d^n$ minus the number of successfully computed solutions, as in Subsection \ref{subsec:wilkinson}). For \texttt{phc -u}, the column indexed by `\#' gives the minimum and maximum number of steps on a path, and the column indexed by $h$ gives the ratio of the number of steps for which $\Delta t = \Delta t_1$ is the first candidate stepsize. In this experiment, we took $L = 5, M = 1$ and we set the maximum stepsize to be $0.5$. Note that even for this type of generic systems, the `difficulty' of the paths (based on the number of steps needed) can vary strongly. The case $n = 1, d = 300$ is not supported by \texttt{HC.jl}, because only one byte is used to represent the degree. Note that \texttt{HC.jl} performs extremely well in all other cases in this experiment, both in terms of speed and robustness. The extra comparative experiment in the next subsection will show that, for difficult (non-generic) paths, our heuristic shows better results (this was also shown in Subsections \ref{subsec:hyperbolas} and \ref{subsec:wilkinson}).
\subsection{Clustered solutions} \label{subsec:clusters}
Homotopies that cause danger for path jumping are such that for some parameter value $t^*$ on the path, the map $H(x,t^*)$ is a polynomial system with some solutions that are clustered together. Motivated by this, we construct the following experiment. Let $n_c$ be a parameter representing the number of solution clusters and let $\CS$ represent the `cluster size'. We consider the set of clusters $\{C_1, \ldots, C_{n_c} \}$ where $C_i = \{z_{i,1}, \ldots, z_{i, \CS} \} \subset \C$ is a set of complex numbers that are `clustered' in the following sense. Take $c_i = e^{\frac{i-1}{n_c} 2 \pi \sqrt{-1}}$ and for a real parameter $\alpha$, we define
$
z_{i,j} = c_i + \alpha u^{1/\CS} e^{\frac{j-1}{\CS} 2 \pi \sqrt{-1}},
$
where $u$ is the unit roundoff ($\approx 10^{-16}$ in double precision arithmetic). Define the polynomial 
$
E(x) = \prod_{i=1}^{n_c}  ( \prod_{j=1}^\CS (x-z_{i,j})  ).
$
The situation is illustrated in Figure \ref{fig:clusters} for $n_c = \CS = 5$, $\alpha = 100$.
\begin{figure}
\centering
\includegraphics[scale=1.0]{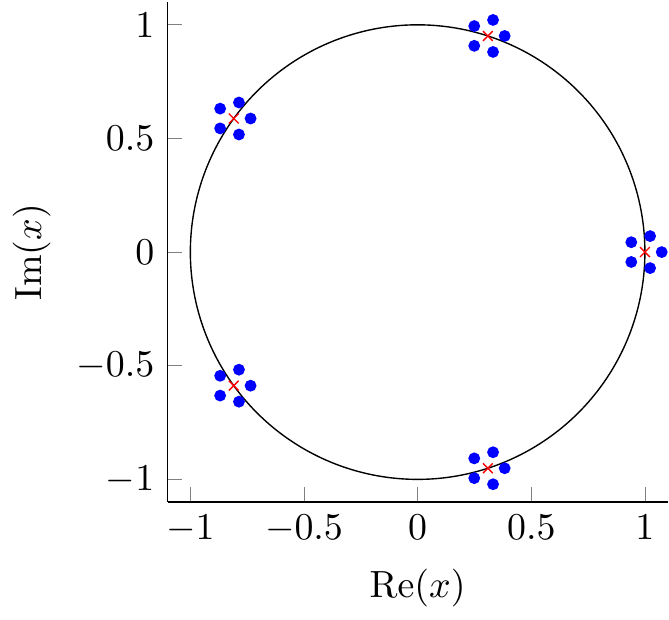}
\caption{Roots (blue dots) and cluster centers (red crosses) of $E(x)$ constructed as in Subsection \ref{subsec:clusters} with $n_c = \CS = 5$, $\alpha = 100$.}
\label{fig:clusters}
\end{figure}
For $\alpha = 1$, we know from classical perturbation theory of univariate polynomials that the roots of $E(x)$ look like the roots of a slightly perturbed version of a polynomial whose $n_c$ roots are the cluster centers, which have multiplicity $\CS$. We will use $\alpha \geq 10$, such that the roots of $E(x)$ are not `numerically singular'. Let $d = n_c \CS$. Let $G(x) = x^d - 1$ and let $F(x)$ be a polynomial of degree $d$ with random complex coefficients, with real and imaginary part drawn from a standard normal distribution. We consider the homotopy 
\begin{equation*}
H(x,t) = (1-t)(1/2-t) G(x) + \gamma_1 t (1-t) E(x) + \gamma_2 t (1/2-t) F(x), \quad t \in [0,1]
\end{equation*}
where $\gamma_1$ and $\gamma_2$ are random complex constants. $G(x)$ represents the start system with starting solutions the $d$-th roots of unity. By tracking the homotopy $H$, the polynomial $G(x)$ is continuously transformed into the random polynomial $F(x)$, passing through the polynomial $(\gamma_1 / 4)E(x)$ (for $t^* = 1/2$) with clustered solutions. The \textit{success rate} (SR) of a numerical path tracker for solving this problem is defined as follows. Let $\hat{d}$ be the number of points among the solutions of $F(x)$ that coincide with a point returned by the path tracker up to a certain tolerance (e.g.\ $10^{-6}$). We set SR $ = \hat{d}/d$. For fixed $\alpha, n_c, \CS$ and track 10 homotopies $H(x,t)$ constructed as above with different random $\gamma_i$ using \texttt{HC.jl} and \texttt{Pad\'e.jl}. We compute the average success rate for these 10 runs. Results are reported in Tables \ref{tab:nc5} and \ref{tab:nc10}. For each problem, the best average success rate is highlighted in blue. 
\setlength{\tabcolsep}{2.0pt}
\begin{table}
\begin{minipage}{0.48 \linewidth}
\scriptsize
\centering
\begin{tabular}{l|l|lllll}
\toprule
$\alpha$              &   \diagbox{Solver}{$\CS$}    & 1   & 2     & 3     & 4     & 5     \\ \hline
\multirow{2}{*}{10}   & \texttt{HC.jl}       & \bl{1.0} & 0.740 & 0.100   & 0.060   & 0.080 \\
                      & \texttt{Pad\'e.jl}   & \bl{1.0} & \bl{0.990} & \bl{0.993}   & \bl{0.995}   & \bl{0.988} \\ \hline
\multirow{2}{*}{100}  & \texttt{HC.jl}       & \bl{1.0} & \bl{1.0}   & 0.627   & \bl{0.985}   & 0.980   \\
                      & \texttt{Pad\'e.jl}   & \bl{1.0} & \bl{1.0}   & \bl{1.0}     & \bl{0.985}   & \bl{0.996}   \\ \hline
\multirow{2}{*}{1000} & \texttt{HC.jl}       & \bl{1.0} & \bl{1.0}   & \bl{1.0}     & \bl{1.0}     & \bl{1.0} \\
                      & \texttt{Pad\'e.jl}   & \bl{1.0} & \bl{1.0}   & 0.987   & \bl{1.0}     & \bl{1.0}\\
                      \bottomrule
\end{tabular}
\caption{Results for $n_c = 5$.}
\label{tab:nc5}
\end{minipage}
~
\begin{minipage}{0.48 \linewidth}
\scriptsize
\centering
\begin{tabular}{l|l|lllll}
\toprule
$\alpha$              &   \diagbox{Solver}{$\CS$}    & 1   & 2     & 3     & 4     & 5     \\ \hline
\multirow{2}{*}{10}   & \texttt{HC.jl}      & \bl{1.0} & 0.095 & 0.083 & 0.078 & 0.504 \\
                      & \texttt{Pad\'e.jl}  & \bl{1.0} & \bl{0.995} & \bl{1.0}   & \bl{1.0}   & \bl{0.990} \\ \hline
\multirow{2}{*}{100}  & \texttt{HC.jl}      & \bl{1.0} & 0.530 & 0.673 & 0.982 & \bl{1.0}   \\
                      & \texttt{Pad\'e.jl}  & \bl{1.0} & \bl{1.0}   & \bl{0.997} & \bl{0.988} & \bl{1.0}   \\ \hline
\multirow{2}{*}{1000} & \texttt{HC.jl}      & \bl{1.0} & \bl{0.995} & 0.990 & \bl{1.0}   & 0.310 \\
                      & \texttt{Pad\'e.jl}  & \bl{1.0} & \bl{0.995} & \bl{0.997} & \bl{1.0}   & \bl{0.992}\\
                      \bottomrule
\end{tabular}
\caption{Results for $n_c = 10$.}
\label{tab:nc10}
\end{minipage}
\end{table}

\subsection{Benchmark Problems}

Parallel computations were applied for the problems in this section.
For two families of structured polynomial systems, 
our experiments show that no path failures and no path jumpings occur,
even when the number of solution paths goes past one million.

\subsubsection{Hardware and software}

The program for the experiments is available in the MPI folder
of PHCpack, available in its source code distribution on github,
under the current name {\tt mpi2padcon}.
The code was executed on two 22-core 2.2 GHz Intel Xeon E5-2699 processors
in a CentOS Linux workstation with 256 GB RAM.
The number of processes for each run equals~44.
The root node manages the distribution of the start solutions and
the collection of the end paths.  
In a static work load assignment, the other 43 processes each track the same number of paths.

\subsubsection{The {\tt katsura}-$n$ systems}

The {\tt katsura} family of systems is named after the problem
posed by Katsura~\cite{Kat94}, see~\cite{Kat90} for a description
of its relevance to applications.
The {\tt katsura}-$n$ problem consists of $n$ quadratic equations
and one linear equation.  
The number of solutions equals $2^n$,
the product of the degrees of all polynomials in the system.

Table~\ref{tabbenchkatsura} summarizes the characteristics
and wall clock times on {\tt katsura}-$n$, for $n$ ranging from 12 to~20.
While the times with HOM4PS-2.0para~\cite{LT09} are much faster
than in Table~\ref{tabbenchkatsura}, Table~3 of~\cite{LT09} reports
2 and 4 curve jumpings respectively for {\tt katsura}-19 and
{\tt katsura}-20.  In the runs with the MPI version for our code,
no path failures and no path jumpings happened.

The good results we obtained required the use of homogeneous coordinates.
When tracking the paths first in affine coordinates, we observed large
values for the coordinates, which forced too small step sizes, which
then resulted in path failures.

Although the defining equations are nice quadrics,
the condition numbers of the solutions gradually increase as $n$ grows.
For example, for $n=20$, the largest condition number of the Jacobian
matrix was of the order $10^7$, observed for 66 solutions.
Table~\ref{tabbenchkatsura} reports the number of real solutions
in the column with header \#real and the number of solutions with
nonzero imaginary part under the header~\#imag.
\setlength{\tabcolsep}{4.0pt}
\begin{table}[hbt]
\scriptsize
\begin{center}
\begin{tabular}{c|r|rr|r|r}
$n$ & \multicolumn{1}{c|}{\#sols} & \#real
    & \multicolumn{1}{c|}{\#imag}
    & \multicolumn{2}{c}{wall clock time (seconds)} \\ \hline
12 &     4,096 &     582 &     3,514 &   7.925e+1 &      1m 19s \\
13 &     8,192 &     900 &     7,292 &   2.081e+2 &      3m 28s \\
14 &    16,384 &   1,606 &    14,778 &   5.065e+2 &      8m 27s \\ 
15 &    32,768 &   2,542 &    30,226 &   1.456e+3 &     24m 16s \\
16 &    65,536 &   4,440 &    61,096 &   4.156e+3 &  1h \phantom{0}9m 16s \\
17 &   131,072 &   7,116 &   123,956 &   1.001e+4 &  2h 46m 50s \\
18 &   262,144 &  12,458 &   249,686 &   2.308e+4 &  6h 24m 15s \\
19 &   524,288 &  20,210 &   504,078 &   5.696e+4 & 15h 49m 20s \\
20 & 1,048,576 &  35,206 & 1,013,370 &   1.317e+5 & 36h 34m 11s
\end{tabular}
\caption{Wall clock time on 44 processes on the {\tt katsura} problem,
in a static workload balancing
schedule with one manager node and 43 worker nodes.
Only the workers track solution paths. }
\label{tabbenchkatsura}
\end{center}
\end{table}
The progression of the wall clock times in Table~\ref{tabbenchkatsura}
illustrates that our new path tracking algorithm
scales well for increasing dimensions, despite the $O(n^4)$ factor
in its cost.

\subsubsection{A model of a neural network}

An interesting class of polynomial systems~\cite{Noo89}
was introduced to the computer algebra community by~\cite{Gat90}. The $n$-dimensional system consists of $n$ cubic equations
and originated from a model of a neural network.
A linear-product root bound provides a sharp root count.
Although the permutation symmetry could be exploited,
with a symmetric homotopy using the algorithms in~\cite{VC94},
this did not happen for the computations summarized 
in Table~\ref{tabbenchnoon}.
Homogeneous coordinates were also applied in the runs.
The formulation of the polynomials in~\cite{Noo89} 
depends on one parameter~$c$, which was set to~$1.1$.
The number of real solutions are reported in Table~\ref{tabbenchnoon}
in the column with header \#real and the number of solutions with
nonzero imaginary part are under the header~\#imag.
\begin{table}[hbt]
\scriptsize
\begin{center}
\begin{tabular}{c|r|rr|r|r}
$n$ &  \#sols  & \#real & \#imag
& \multicolumn{2}{c}{wall clock time (seconds)} \\ \hline
10 &     59,029 &     21~~ &    59,008 & 3.478e+3 &     57m 58s \\
11 &    177,125 &     23~~ &   177,102 & 1.594e+4 &  4h 25m 37s \\
12 &    531,417 &     25~~ &   531,392 & 7.202e+4 & 20h \phantom{0}0m 17s \\
13 &  1,594,297 &     27~~ & 1,594,270 & 3.030e+5 & 84h \phantom{0}9m 58s
\end{tabular}
\caption{Wall clock times on 44 processes on polynomial systems modeling a
neural network, in a static workload balancing schedule with one manager node and 43 worker nodes.
Only the worker nodes track solution paths. }
\label{tabbenchnoon}
\end{center}
\end{table}
Because every new equation is of degree three
and the number of paths triples,
the wall clock time increases more than in the previous benchmark.
As before, no path failures and no path jumpings happened.

\section{Conclusion and Future Work} 
We have proposed an adaptive stepsize predictor algorithm for numerical path tracking in polynomial homotopy continuation. The resulting algorithm can be used to solve challenging problems successfully using only double precision arithmetic and is competitive with existing software. An implementation is available in PHCpack (available on github). It is expected that analogous techniques can be used to track paths that contain singular points for $t \in [0,1)$, to compute monodromy groups and to design efficient new end games for dealing with singular endpoints and solutions at infinity. Another possible direction for future research is to investigate whether the methods of this paper can be made certifiable, for instance by bounding the factors $\beta_1, \beta_2$. One could also choose the parameters $L$ and $M$ based on an analysis of the Pad\'e table at several points on the path. Finally, the use of generalized Pad\'e approximants could speed up the computations \cite{gonvcar1975convergence}.  

\section*{Acknowledgements}
We would like to thank two anonymous referees for their useful suggestions and comments on an earlier version of this paper.

\footnotesize
\bibliography{refs}
\bibliographystyle{abbrv}
\end{document}